\newtheorem{theorem}{Theorem}[section]
\newtheorem{corollary}[theorem]{Corollary}
\newtheorem{Definition}[theorem]{Definition}
\newtheorem{lemma}[theorem]{Lemma}
\newtheorem{proposition}[theorem]{Proposition}
\newtheorem{Example}[theorem]{Example}
\newtheorem{Remark}[theorem]{Remark}
\newenvironment{remark}{\begin{Remark}\begin{em}}{\end{em}\end{Remark}}
\newenvironment{example}{\begin{Example}\begin{em}}{\end{em}\end{Example}}
\newenvironment{definition}{\begin{Definition}\begin{em}}{\end{em}\end{Definition}}
\newcommand{\Bx}{{\mathbf x}}
\newcommand{\By}{{\mathbf y}}
\newcommand{\Ba}{{\mathbf a}}
\newcommand{\Bb}{{\mathbf b}}
\newcommand{\un}{\small{1}}
\newcommand{\ve}{\varepsilon}
\DeclareMathOperator{\Herm}{\mathcal{H}}
\DeclareMathOperator{\ua}{\uparrow\!}
\DeclareMathOperator{\Pro}{\mathcal{P}}
\begin{document}
\title{Existence and Uniqueness of the Karcher Mean on Unital $C^*$-algebras}
\author[Lawson]{Jimmie Lawson}
\address{Department of Mathematics, Louisiana State University,
Baton Rouge, LA 70803, USA}\email{lawson@math.lsu.edu}
\date{December, 2018}

\maketitle

\noindent
\begin{abstract}
The Karcher mean on the cone $\Omega$ of invertible positive elements of
the $C^*$-algebra $\mathcal{B}(E)$ of bounded operators
on a Hilbert space $E$ has recently been extended to a contractive barycentric map
on the space of $L^1$- probability measures on $\Omega$. 
 In this paper we first show that the barycenter satisfies the Karcher equation and then establish 
 the uniqueness of the solution.  Next we establish
that the Karcher mean is real analytic in each of its coordinates, and use this fact to show that both
the Karcher mean and Karcher barycenter map exist and are unique on any unital $C^*$-algebra.
The proof depends crucially on a recent result of the author giving a converse of the inverse function theorem.

\end{abstract}

\noindent\textit{Subject Classification}: Primary: 47B65; Secondary: 47L07, 46L99

\noindent \textit{Key words and phrases.}  Karcher mean and barycenter, Karcher equation, $C^*$-algebra, cone of
positive elements, integrable measures, Wasserstein distance, Bochner integral, real analytic

\section{Introduction}
In the past 15 years there has been a rather intensive study of geometric matrix means, in particular, one that  has been called
the Fr\'echet mean, Cartan mean, least squares mean, or Karcher mean, and their extensions to barycentric maps on spaces of
probability measures \cite{St}, \cite{LL16}.  H. Karcher  \cite{Ka77}
gave an equational characterization of this mean on Riemannian manifolds, and in \cite{LL14}  Y. Lim and the author showed that this equational characterization could
be extended to the cone of positive invertible operators in the $C^*$-algebra of bounded linear maps on a Hilbert space. Later M.\ Palfia studied
more general classes of means and barycenters and more general versions of the Karcher equation \cite{Pa}. 

The main purpose of this paper is to extend this theory of the Karcher mean to the cone of invertible positive elements in  \emph{any} unital $C^*$-algebra, since the methods of
\cite{LL14} do not extend beyond monotone complete $C^*$-algebras.  Indeed that paper closed with the problem of whether the theory could be extended 
to all unital $C^*$-algebras, and it is quite desirable to extend the theory to this general setting.
It should be remarked that this extension  has also recently been achieved by Y.\ Lim
and M. P\'alfia \cite{LP} using more technical machinery loosely based on well-known work of Crandell and Liggett, while the proof given here in some sense squeezes the result out of earlier results  on the Karcher mean.  In addition, each paper contains significant results not contained in
the other.

There are interesting side results that are derived in the course of the paper.   In Section 3 we derive strong connections between the barycentric map for
the arithmetic mean and the Bochner integral in the setting of probability measures. In Section 4 we note new and general continuity conditions holding for the 
contractive barycenter map extending the Karcher mean \cite{LL16} and use them to establish that the barycentric map yields solutions of the Karcher equation. 
 In Section 5 the uniqueness of the Karcher barycenter for integrable probability measures is established.  In Section 6 we show that the Karcher mean is 
 separately real analytic in each of its coordinates.  
In Section 7 we show using Section 6 and the Identity principle  that the cone $\Omega$ of positive invertible elements in any unital 
$C^*$-algebra is closed under the Karcher mean operation (in a completely different fashion from \cite{LP}).  The proof
 is tied up with showing via a recent converse of the inverse function theorem \cite{La18} 
 that Karcher mean as a function of one of its variables (with the others fixed) is a real analytic
diffeomorphism of $\Omega$.

\section{Preliminaries}
For a metric space $X$, let $\mathcal{B}(X)$ be the algebra of Borel sets, the smallest $\sigma$-algebra
containing the open sets.  A \emph{Borel measure} $\mu$ is a countably additive (positive) measure defined on $\mathcal{B}(X)$.
The support of $\mu$ consists of all points $x$ for which $\mu(U)>0$ for each open set $U$ containing $x$.  The support of
$\mu$ is always a closed set.  The \emph{finitely supported probability measures} are those of the form $\sum_{i=1}^n r_i\delta_{x_i}$, where
for each $i$, $r_i\geq 0$, $\sum_{i=1}^n r_i=1$, and $\delta_{x_i}$ is the point measure of mass $1$ at the point $x_i$.

We recall the \emph{Prohorov metric} $\pi(\mu,\nu)$
defined for two Borel probability measures $\mu,\nu$ on $X$ as the infimum of all $\ve>0$ such that for all closed sets $A$,
$$\mu(A)\leq \nu(A^\ve)+\ve,~~~\nu(A)\leq \mu(A^\ve)+\ve,$$
where $A^\ve=\{x\in X:d(x,y)<\ve \mbox{ for some }y\in A\}$.
The following result appears in \cite{La}.

\begin{proposition}\label{P:support1}
A  Borel probability measure $\mu$ on a metric space $(X,d)$ has separable support.  Furthermore, the following are equivalent.\\
(1) There exists a sequence $\{\mu_n\}$ of finitely supported measures $($with rational coefficients$)$ that converges to $\mu$ with respect to
the Prohorov metric.\\
(2) The support of $\mu$ has measure $1$, i.e., $\mu$ is
support-concentrated.
\end{proposition}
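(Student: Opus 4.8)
The plan is to treat the three assertions in turn: the separability of the support, then $(2)\Rightarrow(1)$, then $(1)\Rightarrow(2)$. Throughout I write $S$ for the support of $\mu$.

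For separability I would exploit the fact that a probability measure cannot charge uncountably many pairwise disjoint sets positively. Fix $\epsilon>0$ and, using Zorn's lemma, choose a maximal $\epsilon$-separated subset $D_\epsilon$ of $S$ (points pairwise at distance $\geq\epsilon$). The open balls $B(x,\epsilon/2)$ for $x\in D_\epsilon$ are then pairwise disjoint by the triangle inequality, and each has positive measure because its center lies in $S$. If $D_\epsilon$ were uncountable, then writing it as $\bigcup_n\{x: \mu(B(x,\epsilon/2))>1/n\}$ forces one of these sets to be infinite; selecting more than $n$ of the corresponding disjoint balls yields a Borel set of measure exceeding $1$, a contradiction. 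Hence each $D_\epsilon$ is countable, and by maximality it is $\epsilon$-dense in $S$, so $D=\bigcup_m D_{1/m}$ is a countable dense subset of $S$ and $S$ is separable.

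For $(2)\Rightarrow(1)$, assume $\mu(S)=1$ and fix a countable dense set $\{d_i\}$ in $S$. For each $\epsilon=1/n$ I would disjointify the cover $\{B(d_i,\epsilon)\}$ of $S$ into Borel pieces $B_i\subseteq B(d_i,\epsilon)$, choose $N$ with $\sum_{i\le N}\mu(B_i)>1-\epsilon$, and push $\mu$ forward under the map $T$ sending each point of $B_i$ (for $i\le N$) to $d_i$ and all remaining mass to $d_1$. The image $T_*\mu$ is finitely supported, and since $d(x,T(x))<\epsilon$ off a set of measure $<\epsilon$, the coupling given by $x\mapsto(x,T(x))$ yields $\mu(A)\le (T_*\mu)(A^\epsilon)+\epsilon$ and the reverse inequality directly from the definition of $\pi$, whence $\pi(\mu,T_*\mu)\le\epsilon$. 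A final perturbation of the finitely many weights to nearby rationals summing to $1$ alters the measure by at most $\epsilon$ in total variation, hence by at most $\epsilon$ in Prohorov distance, producing rational finitely supported $\mu_n$ with $\pi(\mu,\mu_n)\to 0$.

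For $(1)\Rightarrow(2)$, let $\mu_n\to\mu$ with each $\mu_n$ finitely supported on $F_n$, and put $C=\overline{\bigcup_n F_n}$, a separable closed set with $\mu_n(C)=1$ for all $n$. The key estimate, read off from the definition of $\pi$, is that for closed $F$ and $\epsilon>0$ one has $\mu_n(F)\le\mu(F^\epsilon)+\epsilon$ for large $n$; letting $n\to\infty$ and then $\epsilon\downarrow 0$, using $\bigcap_\epsilon F^\epsilon=F$ and downward continuity of the finite measure $\mu$, gives $\limsup_n\mu_n(F)\le\mu(F)$. With $F=C$ this yields $\mu(C)=1$. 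It remains to upgrade ``concentrated on a separable set'' to ``the support has measure $1$'': each point of $C\setminus S$ has an open neighborhood of measure zero, and since $C$ is separable metric it is Lindel\"of, so countably many such null neighborhoods cover $C\setminus S$, forcing $\mu(C\setminus S)=0$ and hence $\mu(S)=1$. I expect the main obstacle to be the $(2)\Rightarrow(1)$ construction, where one must simultaneously control the transport distance (via the diameter-$\epsilon$ pieces), absorb the leftover tail mass into the additive $\epsilon$ slack, and manage the rational rounding, checking that the coupling delivers both Prohorov inequalities; the separability argument is a clean counting step and the Lindel\"of step is the one genuinely topological point.
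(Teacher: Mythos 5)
Your proof is correct in all three parts: the maximal $\varepsilon$-separated-set argument for separability of the support, the partition-and-transport construction for $(2)\Rightarrow(1)$ (including the rational rounding, since the Prohorov metric is dominated by total variation), and the portmanteau-style inequality plus the Lindel\"of upgrade from $\mu(C)=1$ to $\mu(\mathrm{supp}\,\mu)=1$ for $(1)\Rightarrow(2)$ all check out. Note that the paper itself gives no proof of this proposition --- it is quoted from \cite{La} --- but your $(2)\Rightarrow(1)$ construction (covering the support by diameter-$\varepsilon$ Borel sets, disjointifying, sending each piece to a representative point, and absorbing the tail mass at a fixed point) is essentially the same scheme the paper deploys in its Appendix (Lemma \ref{L:seq} and Corollary \ref{C:two}) for Wasserstein approximation, here run for the Prohorov metric with both defining inequalities verified through the coupling $x\mapsto(x,T(x))$. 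So your argument is sound and entirely in the spirit of the paper's own toolkit.
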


\begin{remark}\label{R:simple}
One may replace (1) by a sequence of uniform probability measures of finite support, i.e., of the form  $(1/n)\sum_{i=1}^n \delta_{x_i}$;
see Lemma 3.1 of \cite{LL16}.
\end{remark}

Let ${\mathcal P}(X)$ be the set of all support-concentrated Borel probability measures on $(X, {\mathcal B}(X))$
and  ${\mathcal P}_{0}(X)$ the set of all uniform probability measures $\mu=\frac{1}{n}\sum_{j=1}^{n}\delta_{x_{j}}$ of finite support.
Let ${\mathcal P}^{1}(X)\subseteq{\mathcal P}(X)$ be the set of probability measures with \emph{finite first moment}: for some (and hence all) $y\in X,$
$$\int_{X}d(x,y)d\mu(x)<\infty.$$
The Borel probability measures with finite first moment are also called the \emph{integrable} probability measures.

Let $(X,\mathcal{M})$ be a \emph{measure space}, a set $X$ equipped
with a $\sigma$-algebra $\mathcal{M}$, and $(Y,d)$ a metric space. A
function  $f:X\to Y$ is \emph{measurable}  if
$f^{-1}(A)\in\mathcal{M}$  whenever $A\in\mathcal{B}(Y)$. For $f$ to
be measurable, it suffices that $f^{-1}(U)\in\mathcal{M}$ for each
open subset $U$ of $Y$.  Hence  continuous functions are measurable
in the case $X$ is a metrizable space and
$\mathcal{M}=\mathcal{B}(X)$, the Borel algebra. A measurable map
$f:X\to Y$ between metric spaces  induces a \emph{push-forward} map
$f_*:\Pro(X)\to\Pro(Y)$ defined by $f_*(\mu)(B)=\mu(f^{-1}(B))$ for
$\mu\in\Pro(X)$ and $B\in\mathcal{B}(Y)$.  Note for $f$ continuous
that $\mathrm{supp}(f_*(\mu))=f(\mathrm{supp}(\mu))^-$, the closure
of the image of the support of $\mu$.

For $X$ a metric space, we say that $\omega\in\Pro(X\times X)$ is a \emph{coupling} for $\mu,\nu\in\Pro(X)$ and that $\mu,\nu$ are \emph{marginals} for
$\omega$ if for all $B\in\mathcal{B}(X)$
\begin{equation*}
\omega(B\times X)=\mu(B) \qquad \mathrm{and} \qquad \omega(X\times B)=\nu(B).
\end{equation*}
Equivalently $\mu$ and $\nu$ are the push-forwards of $\omega$ under the projection maps $\pi_1$ and $\pi_2$ resp.  We note that
one such coupling is the product measure $\mu\times \nu$, and that for any coupling $\omega$ it must be the case that
$\mathrm{supp}(\omega)\subseteq \mathrm{supp}(\mu)\times\mathrm{supp}(\nu)$.  We denote the set of all couplings for $\mu,\nu \in \Pro(X)$ by
$\Pi(\mu,\nu)$.

The  Wasserstein distance  $d_w$ (alternatively Kantorovich-Rubinstein distance)
on ${\mathcal P}^{1}(X)$ is defined by
\begin{equation}\label{E:winf}
d_w(\mu_{1},\mu_{2}):=\inf_{\pi\in\Pi(\mu_1,\mu_2)} \int_{X\times X} d(x,y) d\pi(x,y)..
\end{equation}
 It is known that $d_w$  is a metric  on ${\mathcal P}^{1}(X)$, is complete resp.\ separable whenever $d$ is complete resp.\ separable
 and that ${\mathcal P}_{0}(X)$ is $d_w$-dense in ${\mathcal P}^{1}(X)$ \cite{BO,St}.  
  
 For the case that $\mu=(1/n)\sum_{i=1}^n \delta_{x_i}$ and $\nu=(1/n)\sum_{i=1}^n\delta_{y_i}$, the equation (\ref{E:winf}) reduces to
\begin{equation}\label{E:winf2}
d_w(\mu,\nu)= \min_{\sigma\in S^{n}}\max\{d(x_{j},y_{\sigma(j)}): 1\leq j\leq n\},
\end{equation}
where $S^n$ is the permutation group on $\{1,\ldots,n\}$.

\section{Means and Barycenters}
We begin this section by recalling (Definition \ref{D:mb} through Proposition \ref{P:meanbary})  several needed notions and results from Section 3 of \cite{LL16}.

\begin{definition}\label{D:mb}
(1) An \emph{$n$-mean} $G_n$ on a set $X$ for $n\geq 2$
is a function $G_n:X^n\to X$ that is idempotent in the sense that $G_n(x,\ldots,x)=x$ for all  $x\in X$.
\newline
(2) An $n$-mean  $G_n$ is \emph{symmetric} or \emph{permutation invariant} if  for each permuation $\sigma$
of $\{1,\ldots, n\}$, $G_n(\Bx_\sigma) =G_{n}(\Bx)$, where $\Bx=(x_1,\ldots,x_n)$ and
${\Bx}_{\sigma}=(x_{\sigma(1)},\dots,x_{\sigma(n)})$. A (symmetric) \emph{mean} $G$ on $X$ is a sequence of means $\{G_n\}$,
one (symmetric) mean for each $n\geq 2$.
\newline
(3) A \emph{barycentric map} or \emph{barycenter} on the set of finitely supported uniform measures $\mathcal{P}_0(X)$ is a map
$\beta: \mathcal{P}_0(X)\to X$ satisfying $\beta(\delta_x)=x$ for each $x\in X$.
\end{definition}

For ${\bf x}=(x_{1},\dots,x_{n})\in X^{n},$ we let
\begin{equation}\label{E:1}
{\bf x}^{k}=(x_{1},\dots,x_{n}, x_{1},\dots,x_{n},\dots, x_{1},\dots,x_{n})\in X^{nk},
\end{equation}  where the number of ${\bf x}$-blocks is $k.$
We define the \emph{carrier} $S(\mathbf{x})$ of $\Bx$ to be the set of entries in $\mathbf{x}$, i.e.,
the smallest finite subset $F$ such that $\mathbf{x}\in F^n$.  We set $[\Bx]$ equal to the
equivalence class of all $n$-tuples obtained by permuting the coordinates of $\Bx=(x_1,\dots,x_n)$.  Note that the
operation $[\Bx]^k=[\Bx^k]$ is well-defined and that all members of $[\Bx]$ all have the same carrier set $S(\Bx)$.

A tuple $\Bx=(x_1,\ldots,x_n)\in X^n$ \emph{induces} on $S(\Bx)$ a uniform probability measure $\mu$
 with finite support by $\mu=\sum_{i=1}^n (1/n) \delta_{x_i}$, where $\delta_{x_i}$ is the point measure of mass 1 at $x_i$.
Since the tuple may contain repetitions of some of its entries, each singleton set
$\{x\}$ for $x\in\{x_1,\ldots,x_n\}$ will have measure $k/n$, where $k$ is the number
of times that it appears in the listing $x_1,\ldots, x_n$.  Note that every
member of $[\Bx]$ induces the same finitely supported probability measure.

\begin{lemma}\label{L:induced}
For each probability measure $\mu$ on $X$ with finite support $F$ for which $\mu(x)(=\mu(\{x\}))$ is rational for each $x\in F$,
there exists a unique $[\Bx]$ inducing $\mu$ such that any  $[\By]$ inducing $\mu$ is equal to $[\Bx]^k$ for some $k\geq 1$.
\end{lemma}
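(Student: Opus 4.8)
The plan is to translate the statement entirely into the arithmetic of multiplicities and then read off both existence and uniqueness. First I would record the basic dictionary: a tuple $\By$ of length $n$ induces $\mu$ precisely when its carrier $S(\By)$ is the support $F=\{a_1,\dots,a_m\}$ of $\mu$ and each $a_j$ occurs exactly $n\,\mu(\{a_j\})$ times in $\By$. Since a permutation class $[\By]$ is nothing more than the multiset of its entries, the class $[\By]$ of an inducing tuple of length $n$ is \emph{completely determined} by $n$ (whenever such a tuple exists at all). Thus the whole problem reduces to understanding which lengths $n$ admit an inducing tuple and how the corresponding classes are related.

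Next I would pin down the admissible lengths. Writing each weight in lowest terms as $\mu(\{a_j\})=p_j/q_j$, the requirement that $n\,\mu(\{a_j\})=n\,p_j/q_j$ be a positive integer is equivalent, by $\gcd(p_j,q_j)=1$, to $q_j\mid n$; hence all $m$ weights are integral exactly when $N\mid n$, where $N:=\operatorname{lcm}(q_1,\dots,q_m)$. Consequently the admissible lengths are precisely the positive multiples $kN$, $k\geq 1$. Setting $c_j:=N\,\mu(\{a_j\})$, these are positive integers with $\sum_{j=1}^m c_j=N$, and for $n=kN$ the forced multiplicity of $a_j$ is $kc_j$.

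For existence I would take $\Bx$ to be any tuple of length $N$ in which $a_j$ appears $c_j$ times; then $[\Bx]$ is the unique inducing class of length $N$, and by construction $\Bx^k$ (the block repeated $k$ times) has $a_j$ appearing $kc_j$ times. Comparing multiplicities with the previous paragraph, $\Bx^k$ induces $\mu$ and, being of length $kN$, has the same class as every inducing tuple of that length; that is, every inducing $[\By]$ equals $[\Bx^k]=[\Bx]^k$ for the unique $k$ with $\mathrm{length}(\By)=kN$. For uniqueness of $[\Bx]$ with this universal property, suppose $[\Bx']$ also has it. Since $[\Bx']$ induces $\mu$ we get $[\Bx']=[\Bx]^j$, and since $[\Bx]$ induces $\mu$ we get $[\Bx]=[\Bx']^{j'}$; composing gives $[\Bx]=([\Bx]^j)^{j'}=[\Bx]^{jj'}$, and comparing lengths forces $jj'=1$, whence $j=j'=1$ and $[\Bx']=[\Bx]$.

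The argument is elementary, so there is no serious obstacle; the only points requiring care are the two ``exactly'' claims --- that the admissible lengths are exactly the multiples of $N$ (which rests on the lowest-terms reduction and the lcm) and that an inducing class is determined by its length (which rests on the carrier being forced to equal $F$). Once these are in place, the power operation $k\mapsto[\Bx]^k$ is visibly a bijection from $\{k\geq 1\}$ onto the set of inducing classes, and the minimal class $[\Bx]$ of length $N$ is its generator.
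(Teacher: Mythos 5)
Your proof is correct and complete: the dictionary (an inducing tuple of length $n$ must have carrier equal to $F$ and contain each support point $a_j$ exactly $n\mu(\{a_j\})$ times, so an inducing class is determined by its length), the identification of admissible lengths as exactly the multiples of $N=\operatorname{lcm}(q_1,\dots,q_m)$ via the lowest-terms reduction, and the length-comparison argument for uniqueness of the generator $[\Bx]$ are all sound. Note that the paper itself states Lemma~\ref{L:induced} without proof, recalling it from Section~3 of \cite{LL16}; your multiplicity/lcm argument is precisely the standard proof one would supply there, so there is nothing to flag beyond observing that your write-up fills in the omitted details correctly.
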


\begin{definition} A mean $G=\{G_{n}\}$ on $X$ is said to be \emph{iterative}  if for all  $n,k\geq 2$ and
all ${\bf x}=(x_{1},\dots,x_{n})\in X^{n}$,
$$G_{n}({\bf  x})=G_{nk}({\bf x}^{k}).$$
If a mean is both iterative and symmetric, we say it is \emph{intrinsic}.
\end{definition}
We have the following corollary to Lemma \ref{L:induced}.
\begin{corollary}\label{C:induced}
Let $G$ be an iterative mean.  Then for any finitely supported probability measure $\mu$ with support $F$ and taking on
rational values, we may define $\beta_G(\mu)=G_n(\Bx)$, for any $\Bx\in F^n$ that induces $\mu$.
\end{corollary}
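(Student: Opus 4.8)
The assertion is really a \emph{well-definedness} statement: it claims that the value $G_n(\Bx)$ is the same for every tuple $\Bx$---of every admissible length $n$---that induces the given measure $\mu$. The plan is to reduce an arbitrary inducing tuple to a canonical minimal one by means of Lemma~\ref{L:induced}, and then to verify that the two operations relating inducing tuples, namely permuting coordinates and block repetition $\Bx\mapsto\Bx^k$, each leave the value of $G$ unchanged.

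First I would deal with the coordinate order. Since any two tuples differing by a permutation induce the same measure, independence of the ordering is needed, and this is exactly permutation invariance; I would therefore use that $G$ is symmetric (so that the means in play are in fact intrinsic), which gives that $G_n(\Bx)$ depends only on the class $[\Bx]$. Writing $G([\Bx]):=G_n(\Bx)$ for any representative, the value is now unambiguous on each equivalence class.

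Next I would invoke Lemma~\ref{L:induced}: among all classes inducing $\mu$ there is a unique minimal class $[\Bx_0]$, of some length $m$, and every inducing class has the form $[\Bx_0]^k=[\Bx_0^k]$ for some $k\geq 1$. The iterative identity $G_m(\Bx_0)=G_{mk}(\Bx_0^k)$ (trivial for $k=1$, and the hypothesis for $k\geq 2$) then gives $G([\Bx_0^k])=G([\Bx_0])$ for all $k$. Hence for an arbitrary inducing tuple $\By$, of length $mk$ with $[\By]=[\Bx_0]^k$, we obtain $G_{mk}(\By)=G([\By])=G([\Bx_0^k])=G([\Bx_0])=G_m(\Bx_0)$, independent of $\By$; so $\beta_G(\mu):=G_m(\Bx_0)$ is well defined and agrees with $G_n(\Bx)$ for every inducing tuple $\Bx\in F^n$.

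The one point deserving care---and the crux of the argument---is the interface between the combinatorial normal form of Lemma~\ref{L:induced} and the defining properties of $G$: one must be certain that every inducing tuple is obtained from the minimal representative by a permutation followed by a block repetition, with no further coincidences left to reconcile, so that symmetry and iterativity together suffice. This is precisely what Lemma~\ref{L:induced} supplies. I would also flag that the bare iterative property does not by itself yield well-definedness, since it says nothing about reordering; symmetry is genuinely used, which is why the relevant means should be understood to be intrinsic.
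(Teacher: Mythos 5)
Your proof is correct and follows exactly the argument the paper leaves implicit: the corollary is stated without proof as an immediate consequence of Lemma~\ref{L:induced}, and your reduction to the minimal class, with iterativity collapsing the block repetitions and permutation invariance handling reorderings within each class $[\Bx]$, is precisely that intended argument. Your closing remark is also well taken: the stated hypothesis ``iterative'' alone does not give well-definedness (two reorderings of a tuple induce the same measure), so symmetry is genuinely needed and the intended hypothesis is ``intrinsic,'' as the surrounding context (e.g.\ Proposition~\ref{P:meanbary}) confirms.
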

Corollary \ref{C:induced} provides the basis for the following equivalence.
\begin{proposition}\label{P:meanbary}
There is a one-to-one correspondence between the intrinsic means and the barycentric maps on $\mathcal{P}_0(X)$ given
in one direction by assigning to an intrinsic mean $G$ the barycentric map $\beta_G$ and in the reverse
direction assigning to a barycentric map
$\beta$ the mean $G_n(x_1,\ldots, x_n)=\beta(\frac{1}{n}\sum_{i=1}^n\delta_{x_i})$.
\end{proposition}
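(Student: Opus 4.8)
The plan is to verify that each of the two assignments lands in its intended target class and that they are mutually inverse; almost all of the genuine content has already been packaged into Lemma~\ref{L:induced} and Corollary~\ref{C:induced}, so what remains is careful bookkeeping. Write $\beta\mapsto G^\beta$ for the reverse assignment, $G^\beta_n(x_1,\dots,x_n)=\beta\bigl(\frac1n\sum_{i=1}^n\delta_{x_i}\bigr)$, and $G\mapsto\beta_G$ for the forward one supplied by Corollary~\ref{C:induced}. The guiding observation throughout is that the uniform measure $\frac1n\sum_{i=1}^n\delta_{x_i}$ depends only on the multiset of the $x_i$ and is insensitive to rescaling by repetition.

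First I would check that the reverse assignment produces an intrinsic mean. Idempotency holds because $\frac1n\sum_{i=1}^n\delta_x=\delta_x$, so $G^\beta_n(x,\dots,x)=\beta(\delta_x)=x$. Symmetry holds because permuting the coordinates leaves $\frac1n\sum_{i=1}^n\delta_{x_i}$ unchanged, hence $G^\beta_n(\mathbf{x}_\sigma)=G^\beta_n(\mathbf{x})$. Iterativity holds because each entry of $\mathbf{x}^k$ occurs $k$ times, so the uniform measure $\frac{1}{nk}\sum$ over the $nk$ entries of $\mathbf{x}^k$ collapses to $\frac1n\sum_{i=1}^n\delta_{x_i}$, giving $G^\beta_{nk}(\mathbf{x}^k)=G^\beta_n(\mathbf{x})$. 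For the forward assignment, Corollary~\ref{C:induced} applies since an intrinsic mean is in particular iterative, and every measure in $\mathcal{P}_0(X)$ is rational-valued, so $\beta_G$ is defined on all of $\mathcal{P}_0(X)$; it is a barycenter because representing $\delta_x$ by the inducing tuple $(x,\dots,x)$ gives $\beta_G(\delta_x)=G_n(x,\dots,x)=x$ by idempotency.

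It remains to show the two assignments are inverse. For $\beta_{G^\beta}=\beta$, evaluate on $\mu=\frac1n\sum_{j=1}^n\delta_{x_j}\in\mathcal{P}_0(X)$: taking the inducing tuple $\mathbf{x}=(x_1,\dots,x_n)$, Corollary~\ref{C:induced} yields $\beta_{G^\beta}(\mu)=G^\beta_n(\mathbf{x})=\beta(\mu)$ straight from the definition of $G^\beta$. For $G^{\beta_G}=G$, fix $\mathbf{x}\in X^n$ and set $\mu=\frac1n\sum_{i=1}^n\delta_{x_i}$; the key point is that $\mathbf{x}$ itself lies in $S(\mathbf{x})^n$ and induces $\mu$, hence is an admissible inducing tuple in Corollary~\ref{C:induced}, giving $G^{\beta_G}_n(\mathbf{x})=\beta_G(\mu)=G_n(\mathbf{x})$. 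This last step is the one deserving genuine attention, and it is the main (if modest) obstacle: feeding the \emph{original} tuple $\mathbf{x}$ back into $\beta_G$ is legitimate only because the well-definedness established in Corollary~\ref{C:induced}—which is precisely where the symmetry and iterativity of $G$ are consumed—guarantees independence of the chosen inducing tuple. Everything else is a matter of unwinding definitions.
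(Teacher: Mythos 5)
Your proposal is correct and follows exactly the route the paper indicates: the paper itself omits the proof (deferring to \cite{LL16}) but explicitly flags Corollary~\ref{C:induced} as ``the basis for the following equivalence,'' and your verification---intrinsicness of $G^\beta$, well-definedness of $\beta_G$ via Lemma~\ref{L:induced}/Corollary~\ref{C:induced}, and the two composite identities---is precisely that intended argument. You also correctly isolate the one nontrivial point, namely that feeding the original tuple $\mathbf{x}$ back into $\beta_G$ is justified by the independence of the inducing tuple, which is where symmetry and iterativity are actually used.
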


We specialize to means and barycenters in metric spaces.
\begin{definition} \label{D:contr}
An $n$-mean $G_{n}:X^{n}\to X$ is said to be \emph{subadditive}
if for all ${\bf x}=(x_{1},\dots,x_{n}),{\bf
y}=(y_{1},\dots,y_{n})\in X^{n},$
\begin{eqnarray*}\label{E:Cmean}d(G_{n}({\bf x}), G_{n}({\bf y}))\leq
\frac{1}{n}\sum_{j=1}^{n}d(x_{j},y_{j}).
\end{eqnarray*}
A mean $G=\{G_{n}\}$ is said to be subadditive if each $G_{n}$ is.
\end{definition}

We recall the definition of a contractive barycentric map.
\begin{definition} \label{D:cont}
Let $(X,d)$ be a metric space. A \emph{contractive barycentric map} on $\Pro^1(X)$ is a map
$\beta:{\mathcal P}^1(X)\to X$  satisfying $\beta(\delta_x)=x$ for all $x\in X$ and $d(\beta(\mu),\beta(\nu))\leq
d_w(\mu,\nu)$ for all $\mu,\nu\in\Pro^1(X)$.  \end{definition}

The following is part of Proposition 2.7 of \cite{LL16}.
\begin{proposition}\label{P:extend}
A subadditive intrinsic mean $G$  on a metric space $X$ uniquely
gives rise to a contractive barycentric map on $\Pro_0(X)$. If $X$
is complete, the barycentric map uniquely extends to a contractive
barycentric map $\beta_G:\Pro^1(X)\to X$ .
\end{proposition}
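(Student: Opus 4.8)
The plan is to prove the two assertions in turn: first construct and verify the contractive barycenter on $\mathcal{P}_0(X)$, then extend it by density when $X$ is complete.

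For the first assertion I would start from the barycentric map $\beta_G$ already furnished by Proposition \ref{P:meanbary}, namely $\beta_G(\mu)=G_n(\Bx)$ for any $\Bx\in X^n$ inducing $\mu$; its existence, well-definedness, and uniqueness given $G$ are exactly the content of Corollary \ref{C:induced} and Proposition \ref{P:meanbary}, so the only thing left to check is the contractive inequality. Given $\mu=(1/n)\sum_{i=1}^n\delta_{x_i}$ and $\nu=(1/m)\sum_{j=1}^m\delta_{y_j}$ in $\mathcal{P}_0(X)$, I would pass to a common atom-count $N=nm$ by replacing $\Bx$ with $\Bx^m$ and $\By$ with $\By^n$, which by (\ref{E:1}) induce $\mu$ and $\nu$ respectively; the iterative property then gives $G_N(\Bx^m)=G_n(\Bx)=\beta_G(\mu)$ and $G_N(\By^n)=G_m(\By)=\beta_G(\nu)$. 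Since both measures are now uniform on $N$ atoms, formula (\ref{E:winf2}) computes $d_w(\mu,\nu)$ as the minimum over $\sigma\in S^N$ of the averaged matching cost between the entries of $\Bx^m$ and $\By^n$. Choosing an optimal $\sigma$ and using the symmetry of $G$ to reorder the second tuple without changing $G_N(\By^n)$, the subadditivity of $G_N$ yields
\[
d(\beta_G(\mu),\beta_G(\nu))=d\big(G_N(\Bx^m),G_N((\By^n)_\sigma)\big)\leq \frac{1}{N}\sum_{k=1}^N d\big((\Bx^m)_k,(\By^n)_{\sigma(k)}\big)=d_w(\mu,\nu),
\]
which is the desired contraction.

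For the second assertion I would invoke the standard extension theorem for uniformly continuous maps into a complete space. As $\beta_G$ is $1$-Lipschitz on $\mathcal{P}_0(X)$ it is uniformly continuous, and $\mathcal{P}_0(X)$ is $d_w$-dense in $\mathcal{P}^1(X)$ (recorded in the Preliminaries). Given $\mu\in\mathcal{P}^1(X)$, choose $\mu_k\in\mathcal{P}_0(X)$ with $d_w(\mu_k,\mu)\to 0$; the contraction forces $\{\beta_G(\mu_k)\}$ to be Cauchy in $X$, hence convergent by completeness of $X$, and I would define $\beta_G(\mu)$ to be its limit. Independence of the approximating sequence and the contractive inequality on all of $\mathcal{P}^1(X)$ both follow by letting $k,\ell\to\infty$ in $d(\beta_G(\mu_k),\beta_G(\nu_\ell))\leq d_w(\mu_k,\nu_\ell)$; the identity $\beta_G(\delta_x)=x$ persists because $\delta_x\in\mathcal{P}_0(X)$, and uniqueness of the extension is immediate from density together with continuity.

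I expect the one genuinely substantive step to be the reduction to a common atom-count in the first part: correctly aligning the optimal Wasserstein matching on the $N$-fold refinements with the subadditivity estimate, and using the symmetry of $G$ to absorb the permutation $\sigma$, is precisely where both hypotheses, \emph{iterative} and \emph{symmetric}, are indispensable. The extension argument of the second part is routine once density of $\mathcal{P}_0(X)$ and completeness of $X$ are in hand.
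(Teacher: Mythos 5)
Your proof is correct and follows essentially the same standard route as the source the paper cites for this result (Proposition 2.7 of \cite{LL16}), the paper itself giving no proof: reduce to a common atom count $N=nm$ via iterativity, absorb the optimal matching permutation via symmetry, apply subadditivity, then extend by density and completeness. One small caution: as printed, the paper's formula (\ref{E:winf2}) contains a $\max$ (the reduction appropriate to the $\infty$-Wasserstein metric), whereas your argument uses --- and genuinely requires --- the averaged matching cost $\min_{\sigma}\frac{1}{N}\sum_{k=1}^{N} d\big(x_k,y_{\sigma(k)}\big)$, which is the correct finite-support reduction of the metric actually defined in (\ref{E:winf}); you have silently corrected what appears to be a typo in the paper, and your use of the averaged form is the right one.
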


\begin{example}\label{Ex:Banach}
We consider the arithmetic mean $A=\{A_n\}$ on a Banach space $E$ defined by $A_n(x_1,\ldots,x_n)=(1/n)\sum_{i=1}^n x_i.$
It is easily checked to be symmetric and iterative, i.e., intrinsic.   To see that $A$ is contractive we compute
with the help of the triangle inequality for $\mathbf{x}=(x_1,\ldots, x_n)$ and $\mathbf{y}=(y_1,\ldots, y_n)$ in $E^n$,
$$ d(A_n(\mathbf{x}),A_n(\mathbf{y}))=\left\Vert \frac{1}{n}\sum_{i=1}^n x_i-\frac{1}{n}\sum_{i=1}^n y_i\right\Vert\leq\frac{1}{n}\sum_{i=1}^n \Vert x_i-y_i\Vert
=\frac{1}{n}\sum_{i=1}^n d(x_i,y_i).$$
Hence $A$ extends to a contractive barycentric map $\beta_E:\mathcal{P}^1(E)\to E$, called the \emph{arithmetic barycentric map} of $E$. We note
that if we weight $x_i$ with weight $w_i>0$ for $i=1,\ldots,n$, where $w_1+\ldots+w_n=1$, then, as expected, $\beta_E$ returns the weighted arithmetic mean
$\sum_{i=1}^n w_ix_i$.
\end{example}

As long as one is dealing with probability measures, there is a close relationship between the arithmetic barycentric map and the Bockner integral, to which
we now turn.   We consider functions defined on a probability measure space $(A,\mathcal{A},\mu)$, where $A$ is a set, $\mathcal{A}$ is a $\sigma$-algebra of
subsets of $A$ and $\mu$ is a $\sigma$-additive probability measure on $\mathcal{A}$. 
A $\mu$-\emph{simple function} with values in a Banach space $E$ 
is a function from $A$ into $E$ of the form $\sum_{i=1}^n \chi_{A_i}x_i$, where $A_i\in\mathcal{A}$, $x_i\in E$ and $\chi_{A_i}x_i(a)=x_i$ if $a\in A_i$ and $0$ otherwise. A function 
$f:A\to E$  is \emph{strongly $\mu$-measurable} if there exists a sequence $\{s_n\}$ of 
$\mu$-simple functions converging pointwise to $f$ $\mu$-almost everywhere.

\begin{definition} A function $f:A\to E$ is $\mu$-\emph{Bochner integrable} if there exists a 
sequence of $\mu$-simple functions $s_n :A\to E$ such that the following two conditions are met:\\
(1) $\lim_n s_n= f$  $\mu$-almost everywhere as $n\to\infty$;\\
(2) $\lim_n \int_A \Vert f-s_n\Vert\, d\mu=0$.
\end{definition}

We note that $f-s_n$ is strongly $\mu$-measurable since $f$ is by definition and $s_n$ is 
trivially, and hence $\Vert f-s_n\Vert$ is strongly $\mu$-measurable.  
Note that every $\mu$-simple function is $\mu$-Bochner integrable. For 
$s=\sum_{i=1}^n  \chi_{A_i}x_i$ we set 
$$\int_A s\,d\mu = \sum_{i=1}^n \mu(A_i)x_i.$$
It is routine to check that this definition is independent of of the representation of $s$.
If $f$ is $\mu$-Bochner integrable, the limit
$$\int_A f\, d\mu=\lim_{n\to\infty} \int_A s_n\, d\mu$$
exists in $E$ and is called the \emph{Bochner integral} of $f$  with respect to μ$\mu$.
 It is a standard result that this definition is independent of the approximating sequence 
 $\{s_n\}_{n=1}^\infty$ of $\mu$-simple functions.
 
 We recall that given $f:A\to E$ that is $\mu$-measurable, the \emph{pushforward measure} $f_*\mu$ is the measure
 on $E$ given by $f_*\mu(B)=\mu(f^{-1}(B))$ for any Borel set $B$.  The following proposition relates the Bockner
 integral to the arithmetic barycentric map $\beta_E$ of Example \ref{Ex:Banach}.
 \begin{proposition}\label{P:Bock}
 Let $(A,\mathcal{A},\mu)$ be a probabilistic measure space and $E$ a Banach space.  A measurable map $f:A\to E$ is
  Bochner integrable if and only if $f_*\mu\in \mathcal{P}^1(E)$ and in this case
 $\int_A f\,d\mu=\beta_E(f_*\mu)$. 
 \end{proposition}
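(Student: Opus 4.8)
The plan is to separate the two assertions: the equivalence ``Bochner integrable $\iff f_*\mu\in\mathcal{P}^1(E)$'', and the identification of the integral with $\beta_E(f_*\mu)$. The bridge between them is the change-of-variables formula $\int_A \phi\circ f\,d\mu=\int_E \phi\,d(f_*\mu)$, valid for every nonnegative Borel $\phi:E\to[0,\infty]$ (prove it first for indicators, then simple functions, then by monotone convergence). Applying this with $\phi(x)=\|x-y\|$ shows that $f_*\mu$ has finite first moment exactly when $\int_A \|f(a)-y\|\,d\mu(a)<\infty$ for some, hence (by the triangle inequality) every, $y\in E$; taking $y=0$ this is just $\int_A\|f\|\,d\mu<\infty$. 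Thus the first-moment condition on $f_*\mu$ translates precisely into $\mu$-integrability of $\|f\|$.

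For the equivalence I would invoke the classical Bochner theorem, that a strongly $\mu$-measurable $f$ is $\mu$-Bochner integrable if and only if $\int_A\|f\|\,d\mu<\infty$, so everything reduces to matching strong measurability with membership $f_*\mu\in\mathcal{P}(E)$. If $f$ is Bochner integrable, then by definition it is a $\mu$-a.e.\ pointwise limit of simple functions, hence strongly measurable and essentially separably valued; consequently $f_*\mu$ is a Borel probability measure concentrated on a separable subspace, so it is support-concentrated (Proposition \ref{P:support1}), i.e.\ $f_*\mu\in\mathcal{P}(E)$, and the first-moment computation above puts it in $\mathcal{P}^1(E)$. Conversely, if $f_*\mu\in\mathcal{P}^1(E)$, its support $S$ is separable and closed with $\mu(f^{-1}(S))=f_*\mu(S)=1$, so $f$ is essentially separably valued; together with the standing Borel measurability this yields weak measurability and, by the Pettis measurability theorem, strong measurability. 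Since $\int_A\|f\|\,d\mu<\infty$ by the first-moment translation, Bochner's theorem makes $f$ Bochner integrable.

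To identify the value, I would first check the identity on simple functions and then pass to the limit. For $s=\sum_{i=1}^n\chi_{A_i}x_i$ with the $A_i$ disjoint and $\mu(A_i)=p_i$, one has $s_*\mu=\sum_{i=1}^n p_i\delta_{x_i}$, and by the weighted form of Example \ref{Ex:Banach}, $\beta_E(s_*\mu)=\sum_{i=1}^n p_ix_i=\int_A s\,d\mu$. Now choose simple functions $s_n$ with $\int_A\|f-s_n\|\,d\mu\to0$; by definition $\int_A s_n\,d\mu\to\int_A f\,d\mu$. The crucial estimate is $d_w\big((s_n)_*\mu,\,f_*\mu\big)\le\int_A\|s_n(a)-f(a)\|\,d\mu(a)$, obtained by taking the push-forward of $\mu$ under $(s_n,f):A\to E\times E$ as a coupling in $\Pi((s_n)_*\mu,f_*\mu)$ and computing its transport cost via change of variables. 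Contractivity of $\beta_E$ then gives $\|\beta_E((s_n)_*\mu)-\beta_E(f_*\mu)\|\le d_w((s_n)_*\mu,f_*\mu)\to0$, so $\int_A s_n\,d\mu=\beta_E((s_n)_*\mu)\to\beta_E(f_*\mu)$; comparing limits yields $\int_A f\,d\mu=\beta_E(f_*\mu)$. I expect the main obstacle to be the measurability bookkeeping in the equivalence---specifically upgrading the bare ``measurable'' hypothesis to strong measurability via essential separable valuedness and Pettis---whereas the coupling estimate and limiting argument for the value are clean once the simple-function identity is in hand.
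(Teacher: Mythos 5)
Your proof is correct and follows essentially the same route as the paper's: the change-of-variables formula reduces the first-moment condition on $f_*\mu$ to $\int_A\|f\|\,d\mu<\infty$ so the classical Bochner criterion yields the equivalence, and the identity $\int_A f\,d\mu=\beta_E(f_*\mu)$ is obtained exactly as in the paper via the simple-function case, the coupling $(f,s_n)_*\mu\in\Pi(f_*\mu,(s_n)_*\mu)$ giving $d_w\big((s_n)_*\mu,f_*\mu\big)\le\int_A\|f-s_n\|\,d\mu$, and contractivity of $\beta_E$. Your extra measurability bookkeeping (essential separable-valuedness, Pettis, and support-concentration of $f_*\mu$ so that it genuinely lies in $\mathcal{P}^1(E)$ as defined in Section 2) is sound and in fact supplies details the paper's proof leaves implicit.
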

 
 \begin{proof} A well-known necessary and sufficient condition for Bochner integrability is that $\int_A \Vert f\Vert\,d\mu<\infty$.
 By change of variables 
 $ \int_E d_E(x,0)\, df_*\mu(x)=\int_A \Vert f(a)\Vert d\mu(a)$, so $f*\mu$ is also integrable, i.e., is a member of $\mathcal{P}^1(E)$, if and
 only if $f$ is Bochner integrable.
 
Assuming $f$ is Bochner integrable, there exists a sequence of simple functions $s_n$ that converges 
pointwise $\mu$-almost everywhere to $f$ and for which 
 $$\int_A f\, d\mu=\lim_{n\to\infty} \int_A s_n\, d\mu.$$
Since $s_n$ is simple, the support of $(s_n)_*\mu$ is finite, and it is easily verified that $\int_A s_n\, d\mu$ is the weighted arithmetic mean
of  $(s_n)_*\mu$. Define $F:A\to E\times E$ by $F(a)=(f(a),s_n(a))$ and $\pi_n=F_*\mu$.  It is easy to verify that
$\pi_n$ has marginals $f_*\mu$ and  $(s_n)_*\mu$.  We calculate via change of variables, noting that the distance $d_E(x,y)=\Vert x-y\Vert$.  
$$ \int_{E\times E} \Vert x-y\Vert\, dF_*\mu(x,y)=\int_A  d_E(F(a))\, d\mu(a)=\int_A \Vert f-s_n\Vert\, d\mu.$$
Since $d_w(f_*\mu, (s_n)_*\mu)$ is bounded above by the preceding integral and the integral on the right approaches 
$0$ as $n\to 0$, we conclude that  $(s_n)_*\mu\to f_*\mu$  with respect to the Wasserstein distance.  
Since $\int_A s_n\, d\mu=\beta_E( (s_n)_*\mu)$ for each $n$, their limits $\int_A f \, d\mu$ and $\beta_E(f_*\mu)$ are equal.

 \end{proof}

\section{Solutions of the Karcher Equation:  Existence}
In this section we work in the following setting.  Let $\mathbb{A}$ be a $C^*$-algebra with identity and let 
$\Omega$ denote the open cone of positive invertible elements in $\mathbb{A}$.  Let $H=\Herm(\mathbb{A})$ denote
 the subspace of hermitian elements endowed with the norm metric and  topology.

The closed cone $\overline\Omega$ induces a closed partial order on $H$ defined by $x\leq y$ if $y-x\in\overline\Omega$.
We equip $\Omega$ with the Thompson metric, which can be defined from the partial order by
$$ d(x,y)=\log\max\{M(x/y),M(y/x)\} \mbox{ where } M(x/y)=\inf\{t>0:  x\leq ty\}.$$
The Thompson metric is a complete metric on $\Omega$ and the metric topology agrees with the relative norm topology.
The Thompson metric can be alternatively computed from the formula $d(x,y)=\Vert \log(x^{-1/2}yx^{1/2})\Vert$;
see \cite{LL14}.

The exponential function $\exp:H\to\Omega$ defined by the usual exponential power series is a diffeomorphism with inverse denoted by $\log$.
This exponential function is known to be expansive, so the logarithmic function is contractive.  For $x\in \Omega$, the map
$\theta_x(y)=x^{-1/2}yx^{1/2}$ is a linear order isomorphism on $H$ carrying $\Omega$ to $\Omega$, and restricted to $\Omega$
is an isometry for the Thompson metric (since it can be defined from the order relation).  Since $\theta_x$ is an isometry on $\Omega$
and $\log:\Omega\to H$ is contractive, they induce an isometry $(\theta_x)_*:\mathcal{P}^1(\Omega)\to\mathcal{P}^1(\Omega)$
and a contractive map $\log_*: \mathcal{P}^1(\Omega)\to \mathcal{P}^1(H)$, where the induced maps carry a measure to its
pushforward measure and the metrics are the appropriate Wasserstein metrics arising from the Thompson metric on $\Omega$ and the 
restricted norm metric on $H$.  

\begin{lemma}\label{L:cont1}
For $x\in \Omega$, the map $K_x: \mathcal{P}^1(\Omega)\to H$ defined by $$K_x(\mu)=\int_\Omega \log(x^{-1/2}ax^{-1/2})\, d\mu(a)$$ is contractive.
\end{lemma}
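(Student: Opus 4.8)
The plan is to factor $K_x$ as a composition of maps each of which is already known to be (Wasserstein-)contractive, and then merely chain the estimates. Write $\phi_x(a)=x^{-1/2}ax^{-1/2}$ for the congruence transformation; this is the order isomorphism of $\Omega$ that is a Thompson isometry (the map discussed above), while $\log:\Omega\to H$ is contractive from the Thompson metric to the norm. Setting $f=\log\circ\phi_x:\Omega\to H$, the integrand defining $K_x$ is exactly $f$, so $K_x(\mu)=\int_\Omega f\,d\mu$ is a Bochner integral into the Banach space $H$. Since $\phi_x$ and $\log$ are continuous, $f$ is continuous and hence Borel measurable.

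First I would check that $f$ is Bochner integrable against every $\mu\in\mathcal{P}^1(\Omega)$, so that Proposition \ref{P:Bock} applies. Because $\phi_x$ is a Thompson isometry and $\log$ is contractive,
$$\|f(a)-f(b)\|=\|\log\phi_x(a)-\log\phi_x(b)\|\leq d(\phi_x(a),\phi_x(b))=d(a,b),$$
so $f$ is $1$-Lipschitz. Taking the basepoint $x$, for which $\phi_x(x)=I$ and hence $f(x)=\log I=0$, gives $\int_\Omega\|f(a)\|\,d\mu(a)\leq\int_\Omega d(a,x)\,d\mu(a)<\infty$, the last bound holding since $\mu$ has finite first moment. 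Thus $f$ is Bochner integrable, $f_*\mu\in\mathcal{P}^1(H)$, and Proposition \ref{P:Bock} yields $K_x(\mu)=\beta_H(f_*\mu)$, where $\beta_H$ is the arithmetic barycentric map of $H$ from Example \ref{Ex:Banach}.

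With this identification the contraction is purely formal. Factoring $f_*=\log_*\circ(\phi_x)_*$ and using that $(\phi_x)_*$ is a Wasserstein isometry of $\mathcal{P}^1(\Omega)$, that $\log_*:\mathcal{P}^1(\Omega)\to\mathcal{P}^1(H)$ is Wasserstein-contractive (both recorded in the discussion preceding this lemma), and that $\beta_H$ is contractive, I would estimate
$$\|K_x(\mu)-K_x(\nu)\|=\|\beta_H(f_*\mu)-\beta_H(f_*\nu)\|\leq d_w(f_*\mu,f_*\nu)\leq d_w((\phi_x)_*\mu,(\phi_x)_*\nu)=d_w(\mu,\nu),$$
which is the desired inequality.

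The only genuine work is the integrability and measurability bookkeeping needed to invoke Proposition \ref{P:Bock}; once $K_x(\mu)=\beta_H(f_*\mu)$ is in hand the result is immediate, so I expect the mild subtlety to be confirming that the Bochner integral into $H$ really coincides with the arithmetic barycenter of the pushforward in this setting. Should one prefer to avoid that proposition, the same chain can be obtained directly: for any coupling $\pi$ of $\mu,\nu$ the pushforward $(f\times f)_*\pi$ couples $f_*\mu,f_*\nu$ and $\int\|u-v\|\,d(f\times f)_*\pi=\int\|f(a)-f(b)\|\,d\pi\leq\int d(a,b)\,d\pi$, and taking the infimum over $\pi$ together with the contractivity of $\beta_H$ again gives $\|K_x(\mu)-K_x(\nu)\|\leq d_w(\mu,\nu)$.
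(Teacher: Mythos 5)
Your proof is correct and takes essentially the same route as the paper's: both factor $K_x=\beta_H\circ\log_*\circ(\theta_x)_*$, invoke Proposition \ref{P:Bock} to identify the Bochner integral with the arithmetic barycenter of the pushforward measure, and then chain the three contractions. Your explicit check that $f=\log\circ\,\phi_x$ is $1$-Lipschitz with $f(x)=0$, so that $f_*\mu\in\mathcal{P}^1(H)$ for every $\mu\in\mathcal{P}^1(\Omega)$, simply makes precise an integrability step the paper leaves implicit.
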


\begin{proof} From the previous paragraphs the composite map $\log_*(\theta_x)_*:\mathcal{P}^1(\Omega) \to \mathcal{P}^1(H)$ is contractive
with the image of $\mu$ being the pushforward measure under the composition $\log\circ\, \theta_x$.  Since the pushforward measure belongs
to $\mathcal{P}^1(H)$, we conclude from Proposition \ref{P:Bock} that the Bochner integral $\int_\Omega \log(x^{-1/2}ax^{-1/2})\, d\mu(a)$ exists
and is equal to $\beta_H(\log_*(\theta_x)_*(\mu))$.  Since the composition $\beta_H\log_*(\theta_x)_*$ is a composition of contractions,
it is a contraction and the lemma follows.
\end{proof}

\begin{lemma}\label{L:cont2}
The function  $K:\Omega\times \mathcal{P}^1(\Omega)\to H$ is continuous, where
$$ K(x,\mu)=\int_\Omega \log(x^{-1/2}ax^{-1/2})\, d\mu(a)$$
\end{lemma}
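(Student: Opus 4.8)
The plan is to decouple the two variables, using Lemma \ref{L:cont1} to control the measure variable uniformly in $x$ and proving separate continuity in $x$ by hand, and then to glue the two estimates with the triangle inequality. Write $\theta_x$ for the congruence map $\theta_x(a)=x^{-1/2}ax^{-1/2}$, so that, as recorded in the proof of Lemma \ref{L:cont1}, $K(x,\mu)=\beta_H\bigl(\log_*(\theta_x)_*\mu\bigr)$ and $K_x=K(x,\cdot)$ is a composition of contractions. In particular $\Vert K(x,\mu)-K(x,\nu)\Vert\le d_w(\mu,\nu)$ for \emph{every} $x\in\Omega$, with a contraction constant independent of $x$; this uniformity is what renders the measure variable harmless.

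For joint continuity at a point $(x_0,\mu_0)$ I would estimate
\begin{equation*}
\Vert K(x,\mu)-K(x_0,\mu_0)\Vert\le \Vert K(x,\mu)-K(x,\mu_0)\Vert+\Vert K(x,\mu_0)-K(x_0,\mu_0)\Vert\le d_w(\mu,\mu_0)+\Vert K(x,\mu_0)-K(x_0,\mu_0)\Vert,
\end{equation*}
where the first term tends to $0$ as $\mu\to\mu_0$ by the uniform bound just noted. It therefore remains to prove that for each fixed $\mu\in\mathcal{P}^1(\Omega)$ the map $x\mapsto K(x,\mu)$ is continuous.

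Since $K(x,\mu)=\beta_H\log_*(\theta_x)_*\mu$ and both $\log_*$ and $\beta_H$ are contractive, it suffices to show that $x\mapsto(\theta_x)_*\mu$ is continuous from $\Omega$ into $\mathcal{P}^1(\Omega)$. Here I would use the coupling of $(\theta_x)_*\mu$ and $(\theta_y)_*\mu$ obtained by pushing $\mu$ forward along $a\mapsto(\theta_x(a),\theta_y(a))$, exactly as in the proof of Proposition \ref{P:Bock}; it has the required marginals and yields
\begin{equation*}
d_w\bigl((\theta_x)_*\mu,(\theta_y)_*\mu\bigr)\le \int_\Omega d\bigl(\theta_x(a),\theta_y(a)\bigr)\,d\mu(a).
\end{equation*}
For each fixed $a$ the integrand tends to $0$ as $y\to x$, because $y^{-1/2}ay^{-1/2}\to x^{-1/2}ax^{-1/2}$ in norm and the Thompson and norm topologies agree on $\Omega$. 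To pass to the limit under the integral I would invoke dominated convergence: fixing a base point $a_0$ and using that each of $\theta_x,\theta_y$ is a Thompson isometry,
\begin{equation*}
d\bigl(\theta_x(a),\theta_y(a)\bigr)\le d(a,a_0)+d\bigl(\theta_x(a_0),\theta_y(a_0)\bigr)+d(a_0,a)=2\,d(a,a_0)+d\bigl(\theta_x(a_0),\theta_y(a_0)\bigr),
\end{equation*}
so for $y$ ranging over a fixed bounded ball about $x$ the integrand is dominated by $2\,d(a,a_0)+C$, which is $\mu$-integrable because $\mu$ has finite first moment. Dominated convergence then gives $d_w\bigl((\theta_x)_*\mu,(\theta_y)_*\mu\bigr)\to 0$, hence continuity in $x$, and the theorem follows.

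The main obstacle is precisely this last step, the continuity of $x\mapsto(\theta_x)_*\mu$ into the Wasserstein space: pointwise convergence of the integrands is immediate, but one must exhibit a $\mu$-integrable dominating function valid uniformly over a neighborhood of $x$. The device that overcomes it is the isometry property of the congruence maps $\theta_x$, which converts the otherwise unwieldy quantity $d(\theta_x(a),\theta_y(a))$ into a first-moment term $d(a,a_0)$ plus a term constant in $a$. Finiteness of the first moment of $\mu$ is exactly what is then needed, which is the reason the statement lives on $\mathcal{P}^1(\Omega)$ rather than on all of $\mathcal{P}(\Omega)$.
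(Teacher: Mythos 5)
Your proof is correct, and its overall skeleton matches the paper's: both arguments split $\Vert K(x,\mu)-K(x_0,\mu_0)\Vert$ by the triangle inequality, dispose of the measure variable via the $x$-uniform contractivity of $K_x$ from Lemma \ref{L:cont1}, and then reduce to continuity in $x$ at fixed $\mu$, which is proved by a dominated convergence argument whose integrable dominating function comes from the finite first moment of $\mu$ together with Thompson-metric identities. Where you genuinely diverge is in how the $x$-step is executed. The paper works directly at the level of Bochner integrals: setting $f_n(a)=\log(x_n^{-1/2}ax_n^{-1/2})$, it observes $\Vert f_n(a)\Vert=d(x_n,a)\le M+d(x,a)$ via the formula $d(u,v)=\Vert\log(u^{-1/2}vu^{-1/2})\Vert$, and invokes the dominated convergence theorem \emph{for the Bochner integral} to conclude $\int f_n\,d\mu\to\int f\,d\mu$. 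You instead prove the stronger intermediate statement that $x\mapsto(\theta_x)_*\mu$ is continuous into $(\mathcal{P}^1(\Omega),d_w)$, using the diagonal coupling $a\mapsto(\theta_x(a),\theta_y(a))$ (as in Proposition \ref{P:Bock}) together with the isometry bound $d(\theta_x(a),\theta_y(a))\le 2d(a,a_0)+d(\theta_x(a_0),\theta_y(a_0))$, and then let the contractions $\log_*$ and $\beta_H$ transport this to $K$; here only the \emph{scalar} dominated convergence theorem is needed, and the Wasserstein continuity of the congruence pushforwards is a reusable byproduct. The paper's route is shorter, since the dominating bound appears in one line without mention of couplings; yours is slightly more modular and avoids the vector-valued convergence theorem. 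Both hinge on the same two facts — contractivity of $K_x$ uniformly in $x$, and integrability of $a\mapsto d(a,a_0)$ — so the difference is one of bookkeeping rather than substance. (One small point to make explicit: the constant $C$ bounding $d(\theta_x(a_0),\theta_y(a_0))$ for $y$ near $x$ exists because $y\mapsto\theta_y(a_0)$ is norm-continuous and the norm and Thompson topologies agree on $\Omega$; and your closing ``the theorem follows'' should read ``the lemma follows.'')
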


\begin{proof}
Choose convergent sequences $x_n\to x$ in $\Omega$ and $\mu_n\to \mu$ in $\mathcal{P}^1(\Omega)$.  There exists $M$ such that
$d(x_n,x)\leq M$ for all $n$.  Define $f_n, f: \Omega\to H$ by $f_n(a)=\log(x_n^{-1/2}ax_n^{-1/2})$ and $f(a)=\log(x^{-1/2}ax^{-1/2})$.
By continuity of multiplication $f_n$ converges pointwise to $f$  and for all $n$
\begin{eqnarray*}
\int_\Omega\Vert f_n(a)\Vert\,d\mu(a) &=& \int_\Omega\Vert \log(x_n^{-1/2}ax_n^{-1/2})\Vert d\mu(a)\\
&=&\int_\Omega d(x_n,a)\,d\mu(a)\leq \int_\Omega (M+d(x,a))\,d\mu(a)\\
&\leq& M+\int_\Omega d(x,a)\,d\mu(a)<\infty.
\end{eqnarray*}
where the last inequality follows from $\mu\in\mathcal{P}^1(\Omega)$.  Thus the family $\{\Vert f_n\Vert\} $ is uniformly bounded by 
an integrable function, and hence by Lebesgue Bounded Convergence Theorem for the Bochner integral, we conclude 
that $\lim_n \int_\Omega f_n d\mu=\int_\Omega f\,d\mu$.

Let $\ve>0$ and pick $N$ such that $n\geq N$ implies $\Vert \int_\Omega f\, d\mu-\int_\Omega f_n\,d\mu\Vert<\ve/2$ and
$d_w(\mu,\mu_n)<\ve/2$.   We then have for $n\geq N$
\begin{eqnarray*}
\Vert K(x,\mu)-K(x_n,\mu_n)\Vert& = &\Big\Vert \int_\Omega f\,d\mu-\int_\Omega f_n\,d\mu_n
\Big\Vert\\
&\leq& \Big\Vert \int_\Omega (f-f_n)\, d\mu\Big\Vert +\Big\Vert \int_\Omega f_n\, d\mu-\int_\Omega f_n\, d\mu_n\Big\Vert\\
&\leq& \ve/2+d_w(\mu,\mu_n)<\ve,
\end{eqnarray*}
where we apply Lemma \ref{L:cont1} to obtain the first inequality in the last line.
\end{proof}

We say $x\in\Omega$ \emph{is a solution of the Karcher equation for} $\mu\in\mathcal{P}^1(\Omega)$,
or alternatively that $(x,\mu)$ \emph{is  a solution of the Karcher equation},
if \begin{equation}\label{E:karch1}
\int_\Omega \log(x^{-1/2}ax^{-1/2})\,\mu(a)=0.\end{equation}
  For the case that $\mu=\sum_{i=1}^n \omega_i\delta_{a_i}$ is a finitely
supported probability measure the  Karcher equation becomes
\begin{equation}\label{E:karch2}
\sum_{i=1}^n \omega_i\log (x^{-1/2}a_ix^{-1/2})=0.
\end{equation}

For the special case of the $C^*$-algebra $\mathbb{B}(E)$ of bounded linear operators on a Hilbert
space $E$, it was shown in \cite[Theorem 6.5]{LL14} (in the setting of means) that any finitely
supported probability measure has  a unique solution $\Lambda(\mu)$ for its Karcher equation,
and that solution is called the \emph{Karcher mean}.  (There is an alternative characterization
given in \cite{LL14} of the Karcher mean as a limit of power means.)

In \cite{LL16} the Karcher mean was extended from $\mathcal{P}_0(\Omega)$ to 
$\mathcal{P}^1(\Omega)$ by observing that $\Lambda:\mathcal{P}_0(\Omega)\to 
\Omega$ is contractive from the Wasserstein metric to the Thompson metric, and since
$\mathcal{P}_0(\Omega)$ is dense in $\mathcal{P}^1(\Omega)$ and the Thompson metric
is complete \cite{Th}, $\Gamma$ extends to a contractive  barycentric map from
$\mathcal{P}^1(\Omega)$ to $\Omega$.  It makes sense to call this extension the 
\emph{Karcher barycentric map}.  We can use Lemma \ref{L:cont2} to show the 
desirable result that this extension still satisfies the Karcher equation.

\begin{proposition}\label{P:Kexists}
Let $\Omega$ be the open cone (in the subspace $\mathcal{H}(E)$ of hermitian operators)
of positive invertible elements, let $\Lambda:\mathcal{P}^1(\Omega)\to\Omega$ be the Karcher barycenter map,
and let $\mu\in \mathcal{P}^1(\Omega)$. Then $x=\Gamma(\mu)$ satisfies the Karcher equation (\ref{E:karch1}).
\end{proposition}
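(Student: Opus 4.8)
The plan is to exploit the joint continuity of $K$ together with the density of the finitely supported measures and the already-known fact that finitely supported Karcher means solve the Karcher equation. Writing $K(x,\mu)=\int_\Omega \log(x^{-1/2}ax^{-1/2})\,d\mu(a)$ as in Lemma \ref{L:cont2}, the assertion to be proved is exactly that $K(\Lambda(\mu),\mu)=0$, where $\Lambda$ is the Karcher barycenter map.

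First I would assemble the four ingredients that are already in hand. For a uniform finitely supported measure $\mu_0\in\mathcal{P}_0(\Omega)$, the value $\Lambda(\mu_0)$ is by construction the original Karcher mean, which by \cite{LL14} is the unique solution of the Karcher equation (\ref{E:karch2}); hence $K(\Lambda(\mu_0),\mu_0)=0$ for every $\mu_0\in\mathcal{P}_0(\Omega)$. Second, $\mathcal{P}_0(\Omega)$ is $d_w$-dense in $\mathcal{P}^1(\Omega)$. Third, the Karcher barycenter $\Lambda:\mathcal{P}^1(\Omega)\to\Omega$ is contractive, hence continuous. Fourth, $K$ is jointly continuous on $\Omega\times\mathcal{P}^1(\Omega)$ by Lemma \ref{L:cont2}.

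Given these, the argument is a single limit passage. Fix $\mu\in\mathcal{P}^1(\Omega)$ and choose, using density, a sequence $\{\mu_n\}\subseteq\mathcal{P}_0(\Omega)$ with $d_w(\mu_n,\mu)\to 0$. By continuity of $\Lambda$ we have $\Lambda(\mu_n)\to\Lambda(\mu)$ in the Thompson metric, so the pairs $(\Lambda(\mu_n),\mu_n)$ converge to $(\Lambda(\mu),\mu)$ in $\Omega\times\mathcal{P}^1(\Omega)$. Applying the joint continuity of $K$ and the vanishing of $K$ at the finitely supported level,
$$ K(\Lambda(\mu),\mu)=\lim_{n\to\infty}K(\Lambda(\mu_n),\mu_n)=\lim_{n\to\infty}0=0, $$
which is precisely the Karcher equation (\ref{E:karch1}) for $x=\Lambda(\mu)$.

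The real content of the proof lies not in this final limit but in the facts it rests on, and the hard part has in effect already been dispatched in Lemma \ref{L:cont2}. The one genuine obstacle is the continuity of $K$ in the measure variable: the convergence $K(x,\mu_n)\to K(x,\mu)$ is not a formal consequence of $\mu_n\to\mu$, and in Lemma \ref{L:cont2} it is obtained through the contractivity estimate of Lemma \ref{L:cont1} together with a dominated-convergence argument for the Bochner integral. Because Lemma \ref{L:cont2} delivers exactly the \emph{joint} continuity, no additional uniformity needs to be established here; had only separate continuity of $K$ in each argument been available, the simultaneous limit along $(\Lambda(\mu_n),\mu_n)$ could not be justified, so it is essential that both coordinates move together under the single continuity statement of Lemma \ref{L:cont2}.
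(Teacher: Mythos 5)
Your proposal is correct and coincides with the paper's own proof: both take a sequence $\{\mu_n\}\subseteq\mathcal{P}_0(\Omega)$ converging to $\mu$, use contractivity of $\Lambda$ to get $\Lambda(\mu_n)\to\Lambda(\mu)$, and invoke the joint continuity of $K$ from Lemma \ref{L:cont2} together with $K(\Lambda(\mu_n),\mu_n)=0$ (the finitely supported case from \cite{LL14}) to conclude $K(\Lambda(\mu),\mu)=0$. Your closing remark correctly identifies that the substance lives in Lemma \ref{L:cont2}, exactly as in the paper.
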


\begin{proof}
Let $\{\mu_n\}$ be a sequence in $\Pro_0(\Omega)$ converging to $\mu$, and let $\Lambda(\mu_n)$ be the
Karcher mean for each $\mu_n$.  By continuity of $\Lambda$, $\lim_n\Lambda(\mu_n)=\Lambda(\mu)$.
By Lemma \ref{L:cont2}  
$$K(\Lambda(\mu), \mu)=\lim_n K(\Lambda(\mu_n),\mu_n) =0,$$
which completes the proof.
\end{proof}

\section{Solutions of the Karcher Equation: Uniqueness}
In this section we work in the setting of the $C^*$-algebra $\mathbb{A}=\mathcal{B}(E)$ of bounded linear operators on a Hilbert space $E$
with $\Omega$ denoting the cone of invertible positive operators and $H=\mathcal{H}(E)$ the Banach space of hermitian operators.

\begin{lemma}\label{L:support}
Let $(X,d)$ be a metric space, let $\mu,\nu\in \mathcal{P}(X)$, and suppose $B_r(z)\cap\mathrm{supp\,}\mu\ne\emptyset$, where $B_r(z)$ is the
open ball of radius $r$ around $z$.  If $B_{2r}(z)\cap\mathrm{supp}\,\nu=\emptyset$, then $d_w(\mu,\nu)\geq r\mu(B_r(z))$.
\end{lemma}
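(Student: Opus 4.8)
The plan is to estimate the transport cost of an arbitrary coupling from below and then pass to the infimum defining $d_w$. The geometric core is a single triangle-inequality estimate. If $x \in B_r(z)$ and $y \in \mathrm{supp}\,\nu$, then $d(x,z) < r$, while $y \notin B_{2r}(z)$ forces $d(y,z) \geq 2r$; hence
$$d(x,y) \geq d(y,z) - d(x,z) > 2r - r = r.$$
Thus the cost function $d$ is bounded below by $r$ on the entire rectangle $B_r(z) \times \mathrm{supp}\,\nu$.

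Next I would fix any coupling $\pi \in \Pi(\mu,\nu)$ and restrict the integral to this rectangle:
$$\int_{X\times X} d(x,y)\,d\pi(x,y) \ge \int_{B_r(z)\times \mathrm{supp}\,\nu} d(x,y)\,d\pi(x,y) \ge r\,\pi\big(B_r(z)\times \mathrm{supp}\,\nu\big).$$
It then remains to identify this coupling mass with $\mu(B_r(z))$. Since $\nu$ is support-concentrated, $\nu(X\setminus\mathrm{supp}\,\nu)=0$, and the second-marginal condition gives $\pi\big(X\times(X\setminus\mathrm{supp}\,\nu)\big)=\nu(X\setminus\mathrm{supp}\,\nu)=0$; therefore enlarging the second factor from $\mathrm{supp}\,\nu$ to $X$ leaves the mass unchanged, and the first-marginal condition yields $\pi\big(B_r(z)\times X\big)=\mu(B_r(z))$. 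Consequently every coupling has cost at least $r\mu(B_r(z))$, and passing to the infimum over $\Pi(\mu,\nu)$ gives $d_w(\mu,\nu)\ge r\mu(B_r(z))$ (trivially so if no coupling has finite cost).

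The hypothesis $B_r(z)\cap\mathrm{supp}\,\mu\neq\emptyset$ plays no role in the inequality itself; it merely guarantees the bound is nonvacuous, since an open ball meeting $\mathrm{supp}\,\mu$ has strictly positive $\mu$-measure, so $r\mu(B_r(z))>0$. The only delicate point is the measure-theoretic bookkeeping that converts the coupling mass on the rectangle into a value of the first marginal, and this rests entirely on the support-concentration of $\nu$ together with the marginal identities. I do not anticipate any genuine obstacle: once the triangle-inequality observation is recorded, the argument is short and essentially formal.
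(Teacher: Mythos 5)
Your proof is correct and follows essentially the same route as the paper's: bound the cost of an arbitrary coupling from below by $r$ on the rectangle $B_r(z)\times\mathrm{supp}\,\nu$ via the triangle inequality, identify the coupling mass of that rectangle with $\mu(B_r(z))$, and take the infimum. The only cosmetic difference is that you justify $\pi\bigl(B_r(z)\times\mathrm{supp}\,\nu\bigr)=\mu(B_r(z))$ directly from the marginal condition and support-concentration of $\nu$, whereas the paper invokes $\mathrm{supp}(\pi)\subseteq\mathrm{supp}(\mu)\times\mathrm{supp}(\nu)$; both amount to the same bookkeeping.
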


\begin{proof} Since $B=B_r(z)$ meets $\mathrm{supp}\,\mu$, $\mu(B)>0$. 
For any $\pi\in \Pi(\mu,\nu)$, supp$(\pi)\subseteq \mathrm{supp}(\mu)\times\mathrm{supp}(\nu)$, so
\begin{eqnarray*}
\int_{X\times X} d(x,y)d\pi(x,y) &=& \int_{X\times \mathrm{supp}\,\nu} d(x,y) d\pi \\
&=&  \int_{B\times \mathrm{supp}\,\nu} d(x,y) d\pi+\int_{X\setminus B\times \mathrm{supp}\,\nu} d(x,y) d\pi\\
&\geq& \int_{B\times \mathrm{supp}\,\nu} d(x,y) d\pi\geq \int_{B\times \mathrm{supp}\,\nu} r\, d\pi\\
&=& r\int_{B\times \mathrm{supp}\,\nu}  d\pi =r\pi(B\times X)=r\mu(B).
\end{eqnarray*}
Since $\pi\in\Pi(\mu,\nu)$ was arbitrary, the lemma follows.
\end{proof}


\begin{lemma}\label{L:approx}
If $(x,\mu)$ is a solution of the Karcher equation and $\ve>0$, then there exists $\eta \in \mathcal{P}_0(\Omega)$ such that
$d_w(\mu,\eta)<\ve$ and $(x,\eta)$ is a solution of the Karcher equation.
\end{lemma}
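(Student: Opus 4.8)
The plan is to transport everything to the Banach space $H$ through the homeomorphism $\Phi=\Phi_x\colon\Omega\to H$, $\Phi(a)=\log(x^{-1/2}ax^{-1/2})$, with inverse $\Phi^{-1}(h)=x^{1/2}e^{h}x^{1/2}$. Unwinding the definitions, the pair $(x,\mu)$ solves the Karcher equation precisely when the $H$-valued Bochner integral $\int_\Omega\Phi(a)\,d\mu(a)=K_x(\mu)$ vanishes, and a uniform measure $\eta=\frac1n\sum_i\delta_{a_i}\in\mathcal P_0(\Omega)$ solves it for $x$ iff $\frac1n\sum_i\Phi(a_i)=0$. So the scheme is: approximate $\mu$ by a finitely supported uniform measure, then shift a controlled selection of its atoms so that the $\Phi$-images average to $0$, while keeping the Thompson--Wasserstein displacement small. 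Two structural facts organise the estimates. First, since $a\mapsto x^{-1/2}ax^{-1/2}$ is a Thompson isometry carrying $x$ to the identity and $d(u,I)=\|\log u\|$, we have $\|\Phi(a)\|=d(x,a)$, so the atoms near $x$ are exactly those whose $\Phi$-images have small norm. Second, that same isometry gives $d(\Phi^{-1}(h),\Phi^{-1}(h+s))=d(e^{h},e^{h+s})$, so displacement estimates reduce to the behaviour of $\exp$.

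I begin by fixing, once and for all, $R:=2\big(1+\int_\Omega d(x,a)\,d\mu(a)\big)<\infty$ (finite since $\mu\in\mathcal P^1(\Omega)$) and $L:=e^{R+1}$. Given $\ve>0$, pick $\delta$ with $0<\delta\le 1/2$ and $(1+2L)\delta<\ve$, and---using the density of $\mathcal P_0(\Omega)$ in $\mathcal P^1(\Omega)$---choose $\nu=\frac1n\sum_{i=1}^n\delta_{a_i}$ with $d_w(\mu,\nu)<\delta$. Set $h_i=\Phi(a_i)$ and $c=\frac1n\sum_i h_i=K_x(\nu)$. Since $K_x$ is contractive by Lemma \ref{L:cont1} and $K_x(\mu)=0$, we get $\|c\|=\|K_x(\nu)-K_x(\mu)\|\le d_w(\mu,\nu)<\delta$.

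Next I separate the indices. Let $I=\{i:\|h_i\|\le R\}$. As $a\mapsto d(x,a)$ is $1$-Lipschitz and hence integrates to within $d_w(\mu,\nu)$, $\frac1n\sum_i\|h_i\|=\int_\Omega d(x,a)\,d\nu(a)\le\int_\Omega d(x,a)\,d\mu(a)+\delta\le R/2$, so the complement of $I$ has density at most $1/2$, i.e.\ $|I|\ge n/2$. I leave the atoms with $i\notin I$ fixed and translate each near image by the common vector $s:=-\tfrac{n}{|I|}c$: put $a_i'=\Phi^{-1}(h_i+s)$ for $i\in I$, $a_i'=a_i$ otherwise, and $\eta:=\frac1n\sum_i\delta_{a_i'}\in\mathcal P_0(\Omega)$. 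A direct computation gives $\frac1n\sum_i\Phi(a_i')=c+\frac{|I|}{n}s=0$, so $(x,\eta)$ solves the Karcher equation. For the distance, note $\|s\|\le 2\|c\|\le 2\delta\le 1$, so $\|h_i+\tau s\|\le R+1$ for $i\in I$ and $\tau\in[0,1]$; estimating the length of the path $\tau\mapsto e^{h_i+\tau s}$ with the Finsler form $\|v\|_p=\|p^{-1/2}vp^{-1/2}\|$ of the Thompson metric yields $d(a_i,a_i')=d(e^{h_i},e^{h_i+s})\le L\|s\|\le 2L\delta$, while $d(a_i,a_i')=0$ for $i\notin I$. Since the pairing $a_i\leftrightarrow a_i'$ is a coupling of $\nu$ and $\eta$, $d_w(\nu,\eta)\le\frac1n\sum_i d(a_i,a_i')\le 2L\delta$, and hence $d_w(\mu,\eta)\le d_w(\mu,\nu)+d_w(\nu,\eta)<(1+2L)\delta<\ve$.

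The step demanding the most care is the final displacement bound, because $\exp$ is expansive for the Thompson metric: an $H$-correction of fixed size $\|s\|$ applied at an image $h_i$ of large norm can produce an arbitrarily large Thompson move. This is exactly why the correction is confined to the near atoms, where the Finsler length estimate gives the Lipschitz bound $L\|s\|$, and why $R$ is pinned down in advance from the first moment of $\mu$: the $1$-Lipschitz control of $a\mapsto d(x,a)$ guarantees that the near atoms always carry at least half the mass, so redistributing the fixed total correction $-c$ over them inflates it by a factor of at most $2$. The remaining ingredients---the reformulation through $\Phi$, the inequality $\|c\|\le\delta$, and the coupling bound for $d_w(\nu,\eta)$---are routine.
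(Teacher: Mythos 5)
Your proof is correct, but it repairs the approximating measure by a genuinely different mechanism than the paper's. Both arguments begin identically: approximate $\mu$ by a uniform $\nu\in\mathcal{P}_0(\Omega)$ and use Lemma \ref{L:cont1} to bound the defect $c=K_x(\nu)$ by $d_w(\mu,\nu)$; both then move atoms of $\nu$ so the Karcher sum vanishes exactly. The paper concentrates the \emph{entire} correction on a single atom: it arranges $\nu=(1/N)\sum_{i=1}^N\delta_{x_i}$ with $N\geq n$ (subdividing masses), invokes Lemma \ref{L:support} to find one atom $x_1$ with $d(1,x_1)<2R$, and replaces it by $y_1=x^{1/2}\exp\big(-\sum_{i\geq 2}\log(x^{-1/2}x_ix^{-1/2})\big)x^{1/2}$; the displacement $d(x_1,y_1)$ may be large, but it carries mass $1/N\to 0$, so the transport cost tends to $0$. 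You instead spread the correction over the at least $n/2$ atoms with $\Vert h_i\Vert\leq R$ (your Markov-type mass bound from the first moment is valid, via the easy coupling direction of Kantorovich duality), shifting each by the small common vector $s=-(n/\vert I\vert)c$ in the log chart. This trades the paper's soft limit argument for an explicit quantitative modulus $d_w(\mu,\eta)<(1+2L)\delta$, and it avoids both the block-subdivision bookkeeping and Lemma \ref{L:support}; as a bonus it controls the \emph{maximal} displacement of atoms ($\leq 2L\delta$), not merely the average, whereas the paper's moved atom may be far away. The price is the one ingredient not recorded in the paper: the local Lipschitz bound $d(e^{h},e^{h+s})\leq L\Vert s\Vert$ when $\Vert h+\tau s\Vert\leq R+1$ for $\tau\in[0,1]$. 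This is true, and your constant $L=e^{R+1}$ does check out: the Thompson metric is dominated by the Finsler length for the norm $\Vert v\Vert_p=\Vert p^{-1/2}vp^{-1/2}\Vert$ (standard; see e.g.\ \cite{Neeb} or \cite{LL07}), and the identity $\frac{d}{d\tau}e^{u(\tau)}=\int_0^1 e^{\sigma u}\,\dot u\, e^{(1-\sigma)u}\,d\sigma$ with $u=h+\tau s$ gives $\Vert e^{-u/2}\dot\gamma(\tau)e^{-u/2}\Vert\leq \Vert s\Vert\int_0^1 e^{2\vert\sigma-1/2\vert\,\Vert u\Vert}\,d\sigma=\Vert s\Vert\,\big(e^{\Vert u\Vert}-1\big)/\Vert u\Vert\leq e^{R+1}\Vert s\Vert$. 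You should spell out or cite this estimate, since the paper only records the opposite directional fact (expansivity of $\exp$); note, though, that nothing in your argument depends on the exact value of $L$, so even the cruder order-theoretic bound $M(e^{h+s}/e^{h})\leq 1+e^{2R+1}\Vert s\Vert$ would suffice after enlarging $L$.
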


\begin{proof}  Since $\Pro_0(\Omega)$ is dense in $\Pro^1(\Omega)$, we may select a sequence $\{\mu_n\}\subseteq\Pro_0(\Omega)$
that converges to $\mu$ with respect to the Wasserstein metric on $\Pro^1(\Omega)$ arising from the Thompson metric on $\Omega$.
Without loss of generality we may assume that $\mu_n=(1/N)\sum_{i=1}^N \delta_{x_i}$  for some $N\geq n$, since otherwise we may uniformly
divide each mass $(1/N)\delta_{x_i}$ into $m$ equal parts, where $mN>n$ and rewrite the summation accordingly. Pick $R$ large enough so
that the open ball $B=B_R(1)$ of radius $R$ around $1$ meets supp$(\mu)$; thus $\mu(B)>0$.  For all sufficiently large $n$, 
$d_w(\mu,\mu_n)<R\mu(B)$, so by Lemma \ref{L:support}, supp$(\mu_n)\cap B_{2R}(1)\ne\emptyset$ for such $n$.  We can thus relabel if necessary so  that for $\mu_n=(1/N)\sum_{i=1}^N \delta_{x_i}$,
we have $d(1,x_1)<2R$.  

By Lemma \ref{L:cont1} the map $K_x(\nu)=\int_\Omega x^{-1/2}ax^{-1/2}d\nu(a)$  from
$\mathcal{P}^1(\Omega)$ to $H$ is contractive. Since by hypothesis $K_x(\mu)=0$
we have $$0=\lim_{n\to\infty}K_x(\mu_n)=\lim_n\frac{1}{N}\sum_{i=1}^N 
\log(x^{-1/2}x_ix^{-1/2}),$$ 
where $N$ and each $x_i$ are functions of $n$.  We note that $\lim_n (1/N)\log(x^{-1/2}x_1x^{-1/2})=0$ 
since $N\geq n$ and
\begin{eqnarray*}
d_{\Vert\cdot\Vert}(0,\log(x^{-1/2}x_1x^{-1/2})&=&d_{\Vert\cdot\Vert}(\log(1),\log(x^{-1/2}x_1x^{-1/2})\leq d(1,x^{-1/2}x_1x^{-1/2})\\
&\leq & d(1,x^{-1})+d(x^{-1/2}(1)x^{-1/2},x^{-1/2}x_1x^{-1/2})\\
&=&d(1,x^{-1})+d(1,x_1)\leq d(1,x^{-1})+2R.
\end{eqnarray*}
The bottom equality follows since the map $x\mapsto axa$ on $\Omega$ is
an isometry.  We conclude that
\begin{eqnarray}\label{E:lim0}
0&=&\lim_n \frac{1}{N}\sum_{i=1}^N \log(x^{-1/2}x_ix^{-1/2})\nonumber\\
&=&\lim_n \Big[\frac{1}{N} \log(x^{-1/2}x_1x^{-1/2})+\frac{1}{N}\sum_{i=2}^N \log(x^{-1/2}x_ix^{-1/2})\Big] \nonumber\\
&=&\lim_n \frac{1}{N}\sum_{i=2}^N \log(x^{-1/2}x_ix^{-1/2}).
\end{eqnarray}
We set $y_1=x^{1/2}\exp(-\sum_{i=2}^N \log(x^{-1/2}x_ix^{-1/2}))x^{1/2}$ and define 
$\nu_n\in\Pro_0(\Omega)$ by 
$\nu_n=(1/N)(\delta_{y_1}+\sum_{i=2}^N \delta_{x_i})$.  It then follows
directly from the definition of $y_1$ that
$$\frac{1}{N} \log(x^{-1/2}y_1x^{-1/2})+\frac{1}{N}\sum_{i=2}^N \log(x^{-1/2}x_ix^{-1/2})=0,$$
and hence that $(x,\nu_n)$ is a solution of the Karcher equation.

To complete the proof we show $d_w(\mu_n,\nu_n)\to 0$ as $n\to\infty$, and hence that 
$\lim_n\nu_n=\mu$.  To obtain $\nu_n$ to $\mu_n$, we simply transport the mass of
$(1/N)$ from $x_1$ to $y_1$.  Hence the Wasserstein distance $d_w(\mu_n,\nu_n)$ 
satisfies $d_w(\mu_n,\nu_n)\leq (1/N)d_w(x_1,y_1)$.   We note that
\begin{eqnarray*}
d_w(x_1,y_1)&\leq&d(x_1,1)+d(1,x^{1/2}\exp(-\sum_{i=2}^N \log(x^{-1/2}x_ix^{-1/2})x^{1/2})\\
&\leq& 2R+ d(1,x)+d(x,x^{1/2}\exp(-\sum_{i=2}^N \log(x^{-1/2}x_ix^{-1/2})x^{1/2})\\
&=& 2R+d(1,x)+d(1,\exp(-\sum_{i=2}^N \log(x^{-1/2}x_ix^{-1/2})))\\
&=&2R+d(1,x)+\Big\Vert \sum_{i=2}^N \log(x^{-1/2}x_ix^{-1/2})\Big\Vert.
\end{eqnarray*}
Thus 
$$d_w(\mu_n,\nu_n)\leq \frac{d(x_1,y_1)}{N}\leq \frac{2R+d(1,x)+\Big\Vert \sum_{i=2}^N \log(x^{-1/2}x_ix^{-1/2})\Big\Vert}{N},$$
and we conclude from this inequality and (\ref{E:lim0}) that $\lim_n d_w(\mu_n,\nu_n)=0$.
\end{proof}

\begin{theorem}\label{T:uniq}
Let $\Omega$ be the open cone (in the subspace $\mathcal{H}(E)$ of hermitian operators)
of positive invertible elements, let $\Lambda:\mathcal{P}^1(\Omega)\to\Omega$ be the Karcher barycenter map,
and let $\mu\in \mathcal{P}^1(\Omega)$. Then $x=\Gamma(\mu)$ is the unique solution of  the Karcher equation (\ref{E:karch1})
for $\mu\in \Pro^1(\Omega)$.  
\end{theorem}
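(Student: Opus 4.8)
The plan is to reduce uniqueness to the corresponding (already established) fact for finitely supported measures, using Lemma \ref{L:approx} as the bridge. Existence is already in hand: Proposition \ref{P:Kexists} shows that $x=\Lambda(\mu)$ solves (\ref{E:karch1}), so only uniqueness remains. Accordingly, I would suppose that $(x,\mu)$ is \emph{any} solution of the Karcher equation with $\mu\in\mathcal{P}^1(\Omega)$, and aim to show $x=\Lambda(\mu)$.

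First, fix $\ve>0$. By Lemma \ref{L:approx} there is a finitely supported measure $\eta\in\mathcal{P}_0(\Omega)$ with $d_w(\mu,\eta)<\ve$ such that $(x,\eta)$ is again a solution of the Karcher equation. The crucial feature of that lemma is that the \emph{same} point $x$ solves the equation for the nearby finitely supported $\eta$. For such finitely supported $\eta$, the Karcher equation (\ref{E:karch2}) has a \emph{unique} solution by \cite[Theorem 6.5]{LL14}, and that solution is exactly the Karcher mean $\Lambda(\eta)$. Hence $x=\Lambda(\eta)$.

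Next I would invoke the contractivity of the Karcher barycenter map $\Lambda:\mathcal{P}^1(\Omega)\to\Omega$ from the Wasserstein metric to the Thompson metric. Since $x=\Lambda(\eta)$, this yields
\[
d(x,\Lambda(\mu))=d(\Lambda(\eta),\Lambda(\mu))\leq d_w(\eta,\mu)<\ve.
\]
As $\ve>0$ was arbitrary, $d(x,\Lambda(\mu))=0$, and therefore $x=\Lambda(\mu)$. Combined with Proposition \ref{P:Kexists} this shows that $\Lambda(\mu)$ is the unique solution.

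I do not expect any genuine obstacle in this concluding argument: essentially all of the difficulty has already been absorbed into Lemma \ref{L:approx}, whose content is precisely that a solution for an $L^1$-measure can be approximated by a finitely supported solution \emph{while retaining the same solution point} $x$. The only points requiring care are bookkeeping. One must confirm that the uniqueness cited from \cite{LL14} applies to $\eta$—it is finitely supported with uniform (hence rational) weights after the subdivision carried out in Lemma \ref{L:approx}—and that $\Lambda$ restricted to $\mathcal{P}_0(\Omega)$ does return this unique Karcher solution, so that the identification $x=\Lambda(\eta)$ is legitimate. Once these are in place, the passage $\ve\to 0$ through the contraction inequality closes the argument immediately.
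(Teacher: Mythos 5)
Your proof is correct and follows essentially the same route as the paper: both rest on Lemma \ref{L:approx} to transfer a solution for $\mu$ to a nearby finitely supported measure, the uniqueness of the Karcher mean on $\mathcal{P}_0(\Omega)$ from \cite{LL14}, and the contractivity of $\Lambda$. The only (cosmetic) difference is that the paper argues by contradiction with two arbitrary solutions $x,y$ and two approximants $\eta,\tau$, whereas you compare a single arbitrary solution directly to $\Lambda(\mu)$ with one approximant and let $\ve\to 0$ --- a slightly more streamlined packaging of the identical idea.
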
  

\begin{proof}  Suppose that $(x,\mu)$ and $(y,\mu)$ are both solutions, $x\ne y$.  Set $\ve=(1/2)d(x,y)$.  By Lemma \ref{L:approx}
we can pick $\eta,\tau\in\Pro_0(\Omega)$ such that $d(\eta,\mu)<\ve$, $d(\tau,\mu)<0$ and $(x,\eta)$ and $(y,\tau)$
are solutions of the Karcher equation.  By uniqueness of the Karcher mean for members of $\Pro_0(\Omega)$ \cite{LL14}, $x=\Lambda(\eta)$
and $y=\Lambda(\tau)$.  Since $\Lambda$ is contractive \cite{LL16}, 
$$d(x,y)=d(\Lambda(\eta),\Lambda(\tau))\leq d_w(\eta,\tau)\leq d_w(\eta,\mu)+d_w(\mu,\tau)< 2\ve=d(x,y),$$
a contradiction.
\end{proof}

\section{Analyticity}
In this section we continue to work in the setting of $\mathbb{A}=\mathcal{B}(E)$, the $C^*$-algebra of bounded linear operators on a Hilbert space $E$ with cone $\Omega$ of positive  invertible elements.
Let $\mathcal{H}(\mathbb{A})$  denote the closed subspace of hermitian operators and let $\un$ denote the identity of $\mathbb{A}$.

It is standard that the maps $\exp:\mathcal{H}(\mathbb{A})\to\Omega$ and $\log:\Omega\to 
\mathcal{H}(\mathbb{A})$ are inverse real analytic diffeomorphisms .  Moreover, they are regular in the sense that their derivatives at each point in the domain are invertible maps 
(see, for example,  the proof of Theorem V.1 and Examples VI.6 and VI.7 of \cite{Neeb} or Theorem 9.1 and its proof in \cite{LL07}).

As mentioned earlier, for $a_1,\ldots,a_n,x\in \Omega$ the Karcher equation 
\begin{equation}\label{E:KM0} \log(x^{-1/2}a_1x^{-1/2})+\cdots+ \log(x^{-1/2}a_n x^{-1/2})=0.\end{equation}
has a unique solution $x=\Lambda(a_1,\ldots,a_n)\in\Omega$, the Karcher mean.

\begin{lemma} \label{L:KM1}
For fixed $a_1,\ldots,a_{n-1},x\in\Omega$, there uniquely exists $a\in \Omega$ such that 
\begin{equation}\label{E:KM2} \log(x^{-1/2}a_1x^{-1/2})+\cdots+\log(x^{-1/2}a_{n-1} x^{-1/2})+ \log(x^{-1/2}a x^{-1/2})=0.\end{equation}
The solution is given by $a=x^{1/2}\exp\big(-\sum_{i=1}^{n-1}\log(x^{-1/2}a_ix^{-1/2})\big)x^{1/2}$.
The solution $a$ is also uniquely determined by $x=\Lambda(a_1,\ldots, a_{n-1},a)$.
\end{lemma}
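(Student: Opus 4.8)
The plan is to solve equation (\ref{E:KM2}) explicitly by isolating the single unknown term and inverting the exponential map. First I would collect the fixed data by setting $S=\sum_{i=1}^{n-1}\log(x^{-1/2}a_ix^{-1/2})$, which is a well-defined element of $\Herm(\mathbb{A})$ since each summand is the logarithm of a positive invertible element and hence hermitian. With this notation equation (\ref{E:KM2}) reads $\log(x^{-1/2}ax^{-1/2})=-S$.

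Since $\log:\Omega\to\Herm(\mathbb{A})$ is a bijection with inverse $\exp$, applying $\exp$ gives the equivalent equation $x^{-1/2}ax^{-1/2}=\exp(-S)$, whose right-hand side lies in $\Omega$ because $-S$ is hermitian. I would then solve for $a$ by conjugating with $x^{1/2}$, which yields the stated formula $a=x^{1/2}\exp(-S)x^{1/2}$. Existence follows at once: this $a$ lies in $\Omega$ because $\exp(-S)\in\Omega$ and the map $y\mapsto x^{1/2}yx^{1/2}$ carries $\Omega$ into $\Omega$, and substituting it back verifies (\ref{E:KM2}). Uniqueness follows from the same chain of equivalences read in reverse: the injectivity of $\log$ forces $x^{-1/2}ax^{-1/2}=\exp(-S)$ for any solution $a$, and the injectivity of conjugation by $x^{1/2}$ then pins down $a$ uniquely.

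For the final assertion I would observe that (\ref{E:KM2}) is precisely the Karcher equation (\ref{E:KM0}) for the tuple $(a_1,\ldots,a_{n-1},a)$ evaluated at the candidate $y=x$. By the uniqueness of the Karcher mean for finitely supported measures recalled at the start of this section, $x$ satisfies this Karcher equation if and only if $x=\Lambda(a_1,\ldots,a_{n-1},a)$. Hence the three conditions, namely that $a$ solves (\ref{E:KM2}), that $a$ equals the displayed closed-form expression, and that $x=\Lambda(a_1,\ldots,a_{n-1},a)$, are mutually equivalent, which is exactly the content of the lemma.

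I do not anticipate a genuine obstacle here, since the core argument is purely algebraic manipulation using the bijectivity of $\exp$ and $\log$ and of conjugation by $x^{1/2}$. The only point requiring care is that the equivalence with the condition $x=\Lambda(a_1,\ldots,a_{n-1},a)$ is not a formal rearrangement but rests on the previously established uniqueness of the Karcher mean, so I would invoke that result explicitly rather than treating the last clause as automatic.
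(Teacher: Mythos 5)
Your proposal is correct and follows essentially the same route as the paper: existence via the explicit formula $a=x^{1/2}\exp\big(-\sum_{i=1}^{n-1}\log(x^{-1/2}a_ix^{-1/2})\big)x^{1/2}$ checked by substitution, uniqueness via injectivity of $\log$ (and of conjugation by $x^{1/2}$), and the final clause via the previously established uniqueness of the Karcher mean. The only cosmetic difference is that you derive the formula by inverting the chain of bijections while the paper merely verifies it, which makes your write-up slightly more self-contained but not substantively different.
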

\begin{proof}  One verifies directly by substitution that the $a$ defined in the lemma is indeed a solution
to equation (\ref{E:KM2}). If $b$ were another solution, then it would follow that  $\log(x^{-1/2}a x^{-1/2})=\log(x^{-1/2}b x^{-1/2})$
and hence $a=b$.  The fact that equation (\ref{E:KM2}) holds and the uniqueness of the Karcher mean yield the last assertion.
\end{proof}

\begin{definition}\label{D:KM3}
For fixed $\mathbf{a}=(a_1,\ldots,a_{n-1}) $, the preceding lemma gives rise to a real analytic function
$\gamma:\Omega\to \Omega$ defined by 
$$\gamma(x)=\gamma_{\mathbf{a}}(x)=x^{1/2}\exp\big(-\sum_{i=1}^{n-1}\log(x^{-1/2}a_ix^{-1/2})\big)x^{1/2}.$$
 We also define $\lambda:\Omega\to\Omega$ by $\lambda(a)=\Lambda(a_1,\ldots ,a_{n-1},a)$.  
 \end{definition}
 
 \begin{lemma}\label{L:KM4}
 For fixed $a_1,\ldots, a_{n-1}\in \Omega$, $\gamma$ and $\lambda$ are inverse functions.
 \end{lemma}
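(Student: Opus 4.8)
The plan is to show that $\gamma$ and $\lambda$ are mutually inverse by directly composing them and invoking the two uniqueness statements packaged in Lemma~\ref{L:KM1}. The key observation is that Lemma~\ref{L:KM1} establishes a biconditional: for fixed $a_1,\ldots,a_{n-1}$, and for any pair $(x,a)\in\Omega\times\Omega$, the statement ``$a=\gamma(x)$'' (i.e.\ $a$ is the unique solution of equation (\ref{E:KM2}) relative to $x$) is equivalent to the statement ``$x=\Lambda(a_1,\ldots,a_{n-1},a)$'' (i.e.\ $x=\lambda(a)$). This equivalence is exactly the content of the last two sentences of that lemma, and it says precisely that the graphs of $\gamma$ and $\lambda$ coincide after swapping coordinates, which is the assertion that they are inverse to one another.

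To carry this out I would first verify $\lambda\circ\gamma=\mathrm{id}_\Omega$. Fix $x\in\Omega$ and set $a=\gamma(x)$. By the explicit formula in Lemma~\ref{L:KM1}, $a$ satisfies the Karcher equation (\ref{E:KM2}) relative to $x$, and the uniqueness of the Karcher mean then forces $x=\Lambda(a_1,\ldots,a_{n-1},a)=\lambda(a)=\lambda(\gamma(x))$; this is the final assertion of Lemma~\ref{L:KM1}. Next I would verify $\gamma\circ\lambda=\mathrm{id}_\Omega$. Fix $a\in\Omega$ and set $x=\lambda(a)=\Lambda(a_1,\ldots,a_{n-1},a)$. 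By definition of the Karcher mean, equation (\ref{E:KM2}) holds for this $x$ and this $a$, so $a$ is \emph{a} solution of (\ref{E:KM2}) relative to $x$; by the uniqueness clause of Lemma~\ref{L:KM1} the solution is unique and equal to $\gamma(x)$, whence $a=\gamma(x)=\gamma(\lambda(a))$.

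Since both composites are the identity on $\Omega$, the maps $\gamma$ and $\lambda$ are inverse bijections, which is the claim. I do not expect a genuine obstacle here: essentially all the work is already done inside Lemma~\ref{L:KM1}, and the present lemma is a repackaging of its two uniqueness statements. The only point requiring a little care is bookkeeping about \emph{which} uniqueness is being used in each direction---the $\lambda\circ\gamma$ direction uses uniqueness of the Karcher mean (to pin down $x$ from $a$), while the $\gamma\circ\lambda$ direction uses uniqueness of the solution of (\ref{E:KM2}) for fixed $x$ (to pin down $a$ from $x$). Keeping these two straight is the whole content of the argument.
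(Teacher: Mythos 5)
Your proposal is correct and matches the paper's argument: the paper's proof of Lemma~\ref{L:KM4} simply states that both composites $\lambda\gamma$ and $\gamma\lambda$ being the identity follows directly from Lemma~\ref{L:KM1}, and your write-up spells out exactly that deduction, with the right uniqueness clause (uniqueness of the Karcher mean versus uniqueness of the solution $a$ of equation~(\ref{E:KM2}) for fixed $x$) invoked in each direction.
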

 
 \begin{proof}
 That each of  $\lambda\gamma$ and $\gamma\lambda$ is the identity on $\Omega$ follows directly from Lemma \ref{L:KM1}.  
\end{proof}

\begin{lemma}\label{L:lips}
The identity function from $(\Omega,\Vert\cdot\Vert)$  to $(\Omega,d)$ is locally Lipschitz, and hence $\lambda:(\Omega, \Vert\cdot
\Vert)\to (\Omega,d)$ is locally Lipschitz.
\end{lemma}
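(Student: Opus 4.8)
The plan is to prove the two assertions in order, with the second following immediately from the first. The key input is the formula $d(x,y)=\Vert\log(x^{-1/2}yx^{-1/2})\Vert$ for the Thompson metric recorded in Section 4, which exhibits $d$ as the norm of $\log$ applied to the congruence $y\mapsto x^{-1/2}yx^{-1/2}$. Since a \emph{local} estimate is wanted, I would fix $x_0\in\Omega$ and produce a norm-neighborhood $U$ of $x_0$ together with a constant $L$ such that $d(x,y)\le L\Vert x-y\Vert$ for all $x,y\in U$. Granting this, the second clause is cheap: the assignment $a\mapsto \frac1n\sum_{i=1}^{n-1}\delta_{a_i}+\frac1n\delta_a$ is nonexpansive from $(\Omega,d)$ into $(\Pro^1(\Omega),d_w)$ (transport only the atom at $a$), and the Karcher barycenter $\Lambda$ is contractive; hence $\lambda:(\Omega,d)\to(\Omega,d)$ is nonexpansive, so for $x,y\in U$ one gets $d(\lambda(x),\lambda(y))\le d(x,y)\le L\Vert x-y\Vert$.

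For the identity map I would first record that $\log:\Omega\to\Herm(\mathbb{A})$ is real analytic, as noted at the start of this section, hence locally Lipschitz in norm; in particular there is a norm-ball $V$ about $\un$ and a constant $L_0$ with $\Vert\log w\Vert=\Vert\log w-\log\un\Vert\le L_0\Vert w-\un\Vert$ for $w\in V$. Next, since inversion and the operator square root are norm-continuous on $\Omega$, the map $(x,y)\mapsto z:=x^{-1/2}yx^{-1/2}$ is jointly norm-continuous and sends $(x_0,x_0)$ to $\un$; I may therefore shrink $U$ to a norm-neighborhood of $x_0$ so that $z\in V$ whenever $x,y\in U$, and simultaneously so that $\Vert x^{-1}\Vert\le C$ on $U$. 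Writing $z-\un=x^{-1/2}(y-x)x^{-1/2}$ and using $\Vert x^{-1/2}\Vert^2=\Vert x^{-1}\Vert\le C$, I would estimate
\[
d(x,y)=\Vert\log z\Vert\le L_0\Vert z-\un\Vert=L_0\Vert x^{-1/2}(y-x)x^{-1/2}\Vert\le L_0 C\,\Vert x-y\Vert,
\]
so that $L=L_0 C$ completes the local estimate.

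The only genuine obstacle is arranging the \emph{uniformity} of these constants over a full neighborhood rather than merely at the single point $x_0$: one must keep $z=x^{-1/2}yx^{-1/2}$ inside the region $V$ where the power-series logarithm is Lipschitz, and keep $\Vert x^{-1}\Vert$ bounded, while \emph{both} $x$ and $y$ vary. Both are secured by joint norm-continuity of $(x,y)\mapsto x^{-1/2}yx^{-1/2}$ together with the fact that this congruence equals $\un$ on the diagonal at $x_0$, so no hard analysis is required beyond continuity of inversion and the square root on the cone. I expect the final write-up to be short, with essentially all of the care going into the neighborhood selection of the middle step.
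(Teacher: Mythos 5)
Your proposal is correct, but it proves the first (and main) assertion by a genuinely different route than the paper. You work analytically: you use the formula $d(x,y)=\Vert\log(x^{-1/2}yx^{-1/2})\Vert$, the local Lipschitz bound $\Vert\log w\Vert\le L_0\Vert w-\un\Vert$ near $\un$ coming from the power series of $\log$, and the identity $\Vert x^{-1/2}\Vert^2=\Vert x^{-1}\Vert$ to get $d(x,y)\le L_0\Vert x^{-1}\Vert\,\Vert x-y\Vert$ on a small norm-ball around each $x_0$; your care about keeping $z=x^{-1/2}yx^{-1/2}$ in the Lipschitz region and $\Vert x^{-1}\Vert$ bounded is exactly the right care, and all the ingredients (continuity of inversion and the square root, the series bound for $\log$) are standard. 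The paper instead argues order-theoretically: it covers $\Omega$ by the open sets $W_M=\{x:\un\le Mx\}$, notes $x-y\le\Vert x-y\Vert\un\le M\Vert x-y\Vert\,y$ so that $x\le(1+M\Vert x-y\Vert)y$ and symmetrically, and concludes directly from the order definition of the Thompson metric that $d(x,y)\le\log(1+M\Vert x-y\Vert)\le M\Vert x-y\Vert$. The paper's argument buys a Lipschitz estimate with constant $M$ uniformly on the whole (unbounded) set $W_M$ --- the neighborhood is determined solely by a spectral lower bound, equivalently $\Vert x^{-1}\Vert\le M$, which is essentially the same quantity as your constant $C$ --- and it uses nothing beyond the elementary inequality $\log(1+Mt)\le Mt$, with no appeal to analyticity of $\log$ or the norm formula for $d$. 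Your version needs more machinery and yields only small balls, but it transfers verbatim to any setting where the metric is given by such a $\log$-norm formula. For the second clause both arguments coincide: $\Lambda$ is $d_w$-contractive, hence $\lambda:(\Omega,d)\to(\Omega,d)$ is nonexpansive (your explicit transport of the single atom at $a$ is a fine justification), and composing with the locally Lipschitz identity gives the claim.
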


\begin{proof}
Let $W_M=\{x\in\Omega: Mx\in \un+ \Omega\}$.  Then $W_M$ is an open set, and $\bigcup_M W_M=\Omega$.  
For $x,y\in W_M$,  let $\beta=\Vert x-y\Vert$.  Since $x-y\leq \Vert x-y\Vert\un$, we obtain
\begin{eqnarray*}
x&\leq& y+\Vert x-y\Vert\un\leq y+M\Vert x-y\Vert y=(1+M \Vert x-y\Vert)y
\end{eqnarray*}
and interchanging the role of $x$ and $y$ yields  $y\leq (1+M \Vert x-y\Vert)x$.
It follows that $$d(x,y)\leq \log (1+M \Vert x-y\Vert)\leq  M\Vert x-y\Vert$$
since $\log(1+Mt)\leq Mt$ for $t\geq 0$.

For the second assertion the contractivity of $\Lambda$ \cite{LL16} implies $\lambda:(\Omega, d)\to (\Omega,d)$ is
contractive.  So $\lambda:(\Omega, \Vert\cdot\Vert)\to (\Omega,d)$, which is the composition of this map and that of the
first paragraph, is the composition of a contractive map and a locally Lipschitz map, hence locally Lipschitz.
\end{proof}

The next recent result, a converse of the inverse function theorem and the main result of \cite{La18}, is crucial for what follows.
\begin{theorem}\label{T:conv}
 Let $g:U\to V$ and 
$f:V\to U$ be inverse homeomorphisms between open subsets of Banach spaces.  If $g$ is differentiable of class
$C^p$ and $f$ if locally Lipschitz, then the Fr\'echet derivative of $g$ at each point of $U$ is invertible and
$f$ must be differentiable of class $C^p$.
\end{theorem}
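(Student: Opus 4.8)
The plan is to prove the statement in two stages: first show that the Fréchet derivative $Dg(x)$ is a topological isomorphism at every $x\in U$, and then deduce the differentiability and higher regularity of $f$ by the standard chain-rule bootstrap. Throughout, write $E_1\supseteq U$ and $E_2\supseteq V$ for the ambient Banach spaces, fix $x\in U$, set $y=g(x)$, and let $L$ be a local Lipschitz constant for $f$ near $y$.

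First I would show that $Dg(x)$ is bounded below. Applying $f$ to the identity $f\circ g=\mathrm{id}_U$, for small $h\in E_1$ we have $h=f(g(x+h))-f(g(x))$, so the Lipschitz estimate and differentiability of $g$ give $\|h\|\leq L\|g(x+h)-g(x)\|=L\|Dg(x)h+o(\|h\|)\|$. Absorbing the $o(\|h\|)$ term for $\|h\|$ small yields $\|Dg(x)h\|\geq (1/2L)\|h\|$, first for small $h$ and then, by homogeneity, for all $h$. Thus $Dg(x)$ is injective with closed range $R:=Dg(x)(E_1)$.

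The hard part will be the surjectivity of $Dg(x)$, since in the infinite-dimensional setting a bounded-below operator need not be onto; here I would exploit the other composition $g\circ f=\mathrm{id}_V$ together with the openness of $V$. For small $k\in E_2$ put $x'=f(y+k)$; then $\|x'-x\|\leq L\|k\|$ and $g(x')-g(x)=k$, so differentiability of $g$ gives $k=Dg(x)(x'-x)+o(\|x'-x\|)=Dg(x)(x'-x)+o(\|k\|)$, whence $\mathrm{dist}(k,R)=o(\|k\|)$. Taking $k=tu$ for a unit vector $u$ and letting $t\to 0^+$, and using the homogeneity $\mathrm{dist}(tu,R)=t\,\mathrm{dist}(u,R)$ of the distance to the subspace $R$, we obtain $\mathrm{dist}(u,R)=0$; since $R$ is closed, $u\in R$. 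Therefore $R=E_2$, so $Dg(x)$ is a continuous linear bijection that is bounded below, and hence a topological isomorphism with bounded inverse.

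With invertibility of $Dg(x)$ in hand, the differentiability of $f$ is immediate: rewriting the displayed relation as $f(y+k)-f(y)=x'-x=Dg(x)^{-1}k+o(\|k\|)$ shows that $f$ is Fréchet differentiable at $y$ with $Df(y)=Dg(x)^{-1}=[Dg(f(y))]^{-1}$. Finally I would run the usual regularity bootstrap: the map $y\mapsto Df(y)$ factors as $y\mapsto f(y)\mapsto Dg(f(y))\mapsto [Dg(f(y))]^{-1}$, where operator inversion on the open set of invertible operators is real analytic and $Dg$ is of class $C^{p-1}$. Since $f$ is already continuous, this exhibits $Df$ as continuous, so $f\in C^1$; feeding this improved regularity back into the same factorization raises the order of the composite one step at a time until $f\in C^p$, which completes the proof.
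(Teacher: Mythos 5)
Your first two stages are correct, and they constitute the real substance of the cited result: the bounded-below estimate for $Dg(x)$ obtained from $f\circ g=\mathrm{id}_U$, the surjectivity argument from $g\circ f=\mathrm{id}_V$ (closed range plus the homogeneity $\mathrm{dist}(tu,R)=t\,\mathrm{dist}(u,R)$ forcing $\mathrm{dist}(u,R)=0$), and the identification $Df(y)=[Dg(f(y))]^{-1}$ are all sound. The one delicate point — that the error term $o(\Vert x'-x\Vert)$ is genuinely $o(\Vert k\Vert)$ — is covered by your estimate $\Vert x'-x\Vert\leq L\Vert k\Vert$, and your reduction of $o(\Vert h\Vert)$-absorption to small $h$ followed by homogeneity is fine. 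Note that the paper itself does not reprove this theorem but quotes it from \cite{La18}, whose argument likewise first establishes invertibility of the derivative.

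The gap is in your final bootstrap. Iterating on the factorization $Df=\mathrm{Inv}\circ Dg\circ f$ proves $f\in C^k$ for each finite $k\leq p$, so it handles $p\in\mathbb{N}$ and $p=\infty$, but it cannot reach $p=\omega$: real analyticity is not attained by raising finite orders of smoothness one step at a time, and the factorization only gives analyticity of $Df$ \emph{assuming} analyticity of $f$, which is circular. This is not a peripheral case — the sole application of Theorem \ref{T:conv} in the paper (Corollary \ref{C:big}, hence Corollary \ref{C:big2} and the main Theorem \ref{T:gencstar}) is exactly $p=\omega$. The repair is short: once $Dg(x)$ is known to be invertible at every $x\in U$, invoke the inverse function theorem in the relevant category, which holds in Banach spaces for $C^p$ with $1\leq p\leq\infty$ and also for $p=\omega$ (see \cite{Up}, or complexify and use the holomorphic inverse function theorem). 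Then $g$ is locally a $C^p$ diffeomorphism at each point, and since $f$ is the global inverse homeomorphism it coincides near each point of $V$ with a local $C^p$ inverse, hence $f$ is $C^p$. With that substitution for the bootstrap, your proof is complete for all $p$ including the analytic case.
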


\begin{corollary}\label{C:big}
Let $\Omega$ be the cone of invertible positive elements in the $C^*$-algebra $\mathcal{B}(E)$ of bounded operators
on a Hilbert space $E$. For fixed $a_1,\ldots, a_{n-1}\in \Omega$ the function $\lambda(a)=\Lambda(a_1,\ldots, a_{n-1},a)$
is an analytic diffeomorphism on $\Omega$.
\end{corollary}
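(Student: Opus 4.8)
The plan is to read the corollary off from the converse inverse function theorem (Theorem \ref{T:conv}), and then to upgrade the resulting smoothness to analyticity via the real analytic inverse function theorem. The two maps feeding Theorem \ref{T:conv} are already at hand: I take the real analytic map $g=\gamma$ of Definition \ref{D:KM3} and its inverse $f=\lambda$. By Lemma \ref{L:KM4} these are mutually inverse; they are in fact inverse homeomorphisms, since $\gamma$ is continuous (being analytic) and $\lambda$ is continuous because Lemma \ref{L:lips} shows it is locally Lipschitz from $(\Omega,\Vert\cdot\Vert)$ to $(\Omega,d)$, the two topologies agreeing on $\Omega$.

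With $\gamma$ of class $C^p$ for every $p$ and $\lambda$ locally Lipschitz, Theorem \ref{T:conv} applies. It yields that the Fr\'echet derivative $D\gamma(x)$ is invertible at every $x\in\Omega$ and that $\lambda$ is of class $C^p$ for all $p$. The decisive consequence for us is the invertibility of $D\gamma(x)$ throughout the open subset $\Omega$ of the Banach space $\mathcal{H}(\mathbb{A})$.

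I would then invoke the real analytic inverse function theorem in Banach spaces: since $\gamma$ is real analytic with $D\gamma(x)$ a topological linear isomorphism at each point, $\gamma$ is a local analytic diffeomorphism near every point, and its local inverse is real analytic. By uniqueness of inverses, the global inverse $\lambda$ agrees with these local analytic inverses on suitable neighborhoods, so $\lambda$ is real analytic on all of $\Omega$. Thus $\lambda$ is an analytic diffeomorphism.

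The main obstacle is precisely this last upgrade from finite differentiability to analyticity: Theorem \ref{T:conv} delivers only $C^p$ regularity, never analyticity. The bridge is to use Theorem \ref{T:conv} solely to obtain invertibility of the derivative of the \emph{analytic} map $\gamma$, and then to feed that invertibility into the analytic inverse function theorem, whose output is analytic. Care is needed to ensure the analytic inverse function theorem is applied in the Banach-space setting and that the local analytic inverses patch correctly to the given global inverse $\lambda$.
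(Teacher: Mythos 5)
Your proof is correct and takes essentially the paper's route: Lemma \ref{L:KM4} supplies the inverse pair $(\gamma,\lambda)$, Lemma \ref{L:lips} the local Lipschitz hypothesis, Definition \ref{D:KM3} the analyticity of $\gamma$, and Theorem \ref{T:conv} does the rest. The only deviation is that the paper invokes Theorem \ref{T:conv} directly with $p=\omega$ (its $C^p$ scale is read as including the real analytic case), whereas you apply it for finite $p$ to extract invertibility of $D\gamma(x)$ and then recover analyticity of $\lambda$ from the real analytic inverse function theorem in Banach spaces --- a harmless detour that in effect re-proves the $p=\omega$ case of Theorem \ref{T:conv}.
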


\begin{proof} By Lemma \ref{L:KM4}, $\lambda$ and $\gamma$ or inverse functions and by Lemma \ref{L:lips} $\lambda$
is locally Lipschitz.  From Definition \ref{D:KM3} $\gamma$ is a composition of real analytic functions, hence real analytic.
Thus the corollary follows from the preceding theorem for the case $p=\omega$, the real analytic case.
\end{proof}

\section{The Karcher mean in general $C^*$-algebras}
In this section we extend previous results to the setting of a general unital $C^*$-algebra $\mathbb{A}$.
We begin with what is frequently called the Identity Theorem and at other times grouped with the
principle of analytic continuation. For a proof of the version we quote, see \cite[Theorems 1.11, 3.1]{Up}.
\begin{theorem}\label{T:AC}
Suppose that $D$ is an open connected subset of a Banach space $E$ and that $f,g:D\to F$ are real analytic maps
into another Banach space.  If there exists an open subset $U$ of $D$ such that $f\vert_U=g\vert_U$, then $f=g$.
\end{theorem}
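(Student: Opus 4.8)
The plan is to reduce immediately to a single function and run the classical clopen connectedness argument in the Banach-space setting. Set $h=f-g$, which is real analytic on $D$ (being a difference of real analytic maps) and vanishes identically on the nonempty open set $U$ since $f\vert_U=g\vert_U$. It then suffices to prove that $h\equiv 0$ on $D$. To that end I would define
$$A=\{x\in D: D^k h(x)=0 \text{ for all } k\geq 0\},$$
where $D^k h(x)\in\mathcal{L}^k(E;F)$ denotes the $k$-th Fr\'echet derivative of $h$ at $x$, a continuous symmetric $k$-linear map (with the convention $D^0h=h$). The whole proof comes down to showing that $A$ is nonempty, open, and closed in $D$; since $D$ is connected this forces $A=D$, and in particular $h(x)=D^0h(x)=0$ for every $x\in D$, i.e.\ $f=g$.

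Nonemptiness and closedness are the routine part. Since $h\equiv 0$ on the open set $U$, all Fr\'echet derivatives of $h$ vanish on $U$, so $U\subseteq A$ and $A\neq\emptyset$. For closedness I would use that a real analytic map is in particular $C^\infty$, so each map $x\mapsto D^k h(x)$ is continuous from $D$ into $\mathcal{L}^k(E;F)$; hence each preimage $(D^k h)^{-1}(0)$ is closed, and $A=\bigcap_{k\geq 0}(D^k h)^{-1}(0)$ is an intersection of closed sets, therefore closed in $D$.

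The heart of the argument, and the step where real analyticity is indispensable, is openness. Fix $x\in A$. Real analyticity means that on some ball $B(x,r)\subseteq D$ the function is represented by a convergent series of homogeneous continuous polynomials, $h(x+v)=\sum_{k\geq 0}\tfrac{1}{k!}D^k h(x)(v,\ldots,v)$, the Taylor coefficients being the normalized Fr\'echet derivatives at $x$. Because $x\in A$, every coefficient vanishes, so $h\equiv 0$ on $B(x,r)$; consequently all derivatives of $h$ vanish throughout $B(x,r)$, giving $B(x,r)\subseteq A$. Thus $A$ is open. This is precisely the place where "analytic" cannot be weakened to "$C^\infty$": the vanishing of all derivatives at a single point propagates to a neighborhood only because $h$ agrees with its Taylor series there.

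I expect the main obstacle to be a careful treatment of the local power-series representation for Banach-space-valued real analytic maps, namely that near each point $h$ equals the series of its continuous symmetric homogeneous Taylor polynomials (with radius of convergence bounded below), together with the identification of those polynomials with the normalized Fr\'echet derivatives. Once this local representation and the continuity of all the derivatives are in hand, the clopen argument is immediate; this underlying analytic machinery is exactly why the paper prefers to quote the statement from \cite{Up} rather than reproduce it.
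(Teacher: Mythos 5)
Your proof is correct and is essentially the same argument as the paper's source: the paper gives no proof of its own but quotes the result from \cite[Theorems 1.11, 3.1]{Up}, where the identity principle is established by exactly this clopen argument on the set $A$ of points at which all Fr\'echet derivatives of $h=f-g$ vanish. The background facts you correctly isolate as the only nontrivial input --- that a Banach-space-valued real analytic map is $C^\infty$ and agrees locally with its Taylor series, whose homogeneous terms are the normalized Fr\'echet derivatives --- are precisely what the cited reference supplies.
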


The next theorem is proved as part of Theorem 6.3 of \cite{LL14}.  The proof there is given in the context of a 
$C^*$-algebra of bounded operators on a Hilbert space, but works just as well in the context of a general unital
$C^*$-algebra.  Also we use the modification of the implicit function theorem used in the proof of that case to
the real analytic case.
\begin{theorem}\label{T:local} Let $\mathbb{A}$ be a unital $C^*$-algebra.   Then there exists an open set $U$ containing
the identity $\un$ such that  the restriction of the Karcher mean $\Lambda: U^n\to \Omega$ is real analytic.
\end{theorem}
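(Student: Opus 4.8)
The plan is to realize the Karcher mean near the identity as the solution branch supplied by the \emph{real analytic} implicit function theorem, applied to the left-hand side of the Karcher equation regarded as a map of all $n+1$ variables jointly.

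First I would define, on a neighborhood of $(\un,\ldots,\un,\un)$ in $\Omega^n\times\Omega$, the map $F$ into $H=\mathcal{H}(\mathbb{A})$ by
$$F(a_1,\ldots,a_n,x)=\sum_{i=1}^n\log\bigl(x^{-1/2}a_ix^{-1/2}\bigr).$$
Because $\log:\Omega\to H$ is real analytic, inversion and the square root $x\mapsto x^{1/2}$ are real analytic on $\Omega$, and multiplication is bilinear hence real analytic, $F$ is a real analytic map between open subsets of Banach spaces. Moreover $F(\un,\ldots,\un,\un)=\sum_{i=1}^n\log(\un)=0$, so the fully symmetric point is a solution of the Karcher equation.

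The key computation is the partial Fr\'echet derivative $D_xF$ at the base point. With each $a_i=\un$ the $i$th summand reduces to $x\mapsto\log(x^{-1})=-\log x$; differentiating $t\mapsto -\log(\exp(tv))=-tv$ at $t=0$ gives derivative $v\mapsto -v$, using that $D\log(\un)=\mathrm{Id}$. Summing the $n$ identical contributions yields
$$D_xF(\un,\ldots,\un,\un)=-n\,\mathrm{Id}_H,$$
which is invertible on $H$. I would then invoke the real analytic refinement of the implicit function theorem (the version used in the $C^\infty$ argument of Theorem 6.3 of \cite{LL14}, adapted to the analytic category) to produce an open neighborhood $U$ of $\un$ and a real analytic map $\Lambda:U^n\to\Omega$ with $\Lambda(\un,\ldots,\un)=\un$ and $F(a_1,\ldots,a_n,\Lambda(a_1,\ldots,a_n))\equiv 0$, the solution being the unique one in a neighborhood of $\un$.

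Finally I would identify this analytic branch with the Karcher mean: since $\Lambda(a_1,\ldots,a_n)$ solves the Karcher equation (\ref{E:KM0}) and, by the local uniqueness built into the implicit function theorem, is the only solution near $\un$, it is the Karcher mean on $U^n$, and it is real analytic. The one genuinely delicate point is having the analytic—rather than merely smooth—implicit function theorem available over a Banach space; the invertibility of $D_xF$ and the analyticity of $F$ are then routine. Everything used is algebraic and relies only on the regularity and real analyticity of $\exp$, $\log$, inversion, and the square root, so the argument transfers verbatim from the $\mathcal{B}(E)$ setting of \cite{LL14} to an arbitrary unital $C^*$-algebra $\mathbb{A}$.
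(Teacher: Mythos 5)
Your proposal is correct and is essentially the paper's own argument: the paper proves this theorem simply by citing the implicit function theorem proof of Theorem 6.3 of \cite{LL14}, upgraded to the real analytic implicit function theorem over Banach spaces, which is exactly your construction ($F$ real analytic in all $n+1$ variables, $D_xF(\un,\ldots,\un,\un)=-n\,\mathrm{Id}$ invertible on $\mathcal{H}(\mathbb{A})$, analytic solution branch through the identity). The one point to tighten is the identification of the analytic branch with the Karcher mean: local uniqueness alone does not rule out that $\Lambda(a_1,\ldots,a_n)$ lies outside the implicit function theorem's uniqueness neighborhood, so one should instead invoke the global uniqueness of solutions of the Karcher equation in $\mathcal{B}(E)$ (\cite[Theorem 6.5]{LL14}, applied after embedding $\mathbb{A}$ via Gelfand--Naimark), or note that contractivity of $\Lambda$ forces $\Lambda(a_1,\ldots,a_n)$ to remain near $\un$ when all the $a_i$ do.
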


By the Gelfand-Naimark theorem the $C^*$-algebra $\mathbb{A}$ is isometrically $*$-isomorphic to a closed $C^*$-subalgebra 
containing the identity of a $C^*$-algebra $\mathcal{B}(E)$ of bounded operators on a Hilbert space. We fix some such
$\mathcal{B}(E)$ and hence forth identify $\mathbb{A}$ with the isomorphic subalgebra.  

\begin{lemma}\label{L:closure}
 For any $a_1,\ldots, a_n\in \Omega$, the cone of invertible positive elements of $\mathbb{A}$, we have
 $\Lambda(a_1,\ldots, a_n)\in \Omega$.
 \end{lemma}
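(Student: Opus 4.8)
The plan is to deduce membership in $\mathbb{A}$ by propagating the \emph{local} information of Theorem \ref{T:local} to all of $\Omega^n$ by analytic continuation. First I would set the stage: having identified $\mathbb{A}$ with a closed unital $C^*$-subalgebra of some $\mathcal{B}(E)$, the value $\Lambda(a_1,\ldots,a_n)$ is computed as the unique Karcher mean in $\mathcal{B}(E)$, and the task is to show it lands back in $\mathbb{A}$. Two structural facts are worth recording at the outset. By spectral permanence, an element of $\mathbb{A}$ is positive and invertible in $\mathbb{A}$ if and only if it is so in $\mathcal{B}(E)$; hence $\Omega=\mathbb{A}\cap\Omega_{\mathcal{B}(E)}$, and it suffices to prove $\Lambda(a_1,\ldots,a_n)\in\mathbb{A}$, since the mean is automatically positive and invertible in $\mathcal{B}(E)$. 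Moreover $\Omega$ is an open convex (hence connected) subset of the real Banach space $\mathcal{H}(\mathbb{A})$, which is what makes the Identity Theorem available.

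The device for turning ``lands in the closed subspace $\mathbb{A}$'' into scalar identities is Hahn--Banach: $\Lambda(a_1,\ldots,a_n)\in\mathbb{A}$ precisely when $\psi(\Lambda(a_1,\ldots,a_n))=0$ for every bounded functional $\psi$ in the annihilator $\mathbb{A}^{\perp}\subseteq\mathcal{B}(E)^{*}$. So I fix $\psi\in\mathbb{A}^{\perp}$ and consider $F_\psi:=\psi\circ\Lambda$ on $\Omega^n$. From Section 6, in particular Corollary \ref{C:big} together with the permutation symmetry of the Karcher mean, $\Lambda$ is separately real analytic in each of its coordinates on $\Omega_{\mathcal{B}(E)}^n$; restricting any one variable from $\Omega_{\mathcal{B}(E)}$ to the closed subspace slice $\Omega\subseteq\mathcal{H}(\mathbb{A})$ preserves real analyticity, so $F_\psi$ is separately real analytic in each of its $n$ variables on $\Omega^n$.

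Now I would run the continuation. Theorem \ref{T:local} furnishes an open $U\ni\un$ with $\Lambda(U^n)\subseteq\Omega$, so $F_\psi$ vanishes identically on $U^n$. I propagate this vanishing one variable at a time. At the $k$-th step, assuming $F_\psi\equiv 0$ on $\Omega^{k-1}\times U^{\,n-k+1}$, I fix arbitrary $a_1,\ldots,a_{k-1}\in\Omega$ and $a_{k+1},\ldots,a_n\in U$ and regard $a_k\mapsto F_\psi(a_1,\ldots,a_n)$ as a real analytic function on the connected open set $\Omega$; it vanishes on the open subset $U$, so by the Identity Theorem (Theorem \ref{T:AC}) it vanishes on all of $\Omega$. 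This upgrades the vanishing to $\Omega^{k}\times U^{\,n-k}$, and after $n$ steps $F_\psi\equiv 0$ on $\Omega^n$. As $\psi\in\mathbb{A}^\perp$ was arbitrary, $\Lambda(a_1,\ldots,a_n)\in\mathbb{A}$ for every $(a_1,\ldots,a_n)\in\Omega^n$, whence $\Lambda(a_1,\ldots,a_n)\in\Omega$ by the spectral permanence remark.

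The main obstacle is a circularity that this route is designed to avoid. The tempting alternative---to show directly that $\gamma_{\mathbf a}$ maps $\Omega$ \emph{onto} $\Omega$ by proving its Fr\'echet derivative restricts to an isomorphism of $\mathcal{H}(\mathbb{A})$---founders because invertibility of that restricted derivative is exactly the infinitesimal form of the conclusion, and so cannot be extracted for free from invertibility of the derivative on $\mathcal{H}(\mathcal{B}(E))$. The Identity-Theorem argument sidesteps this by using the local result of Theorem \ref{T:local} only as an open seed and letting analytic continuation do the global work. The two points demanding care are that only \emph{separate} (not joint) real analyticity is available, which is precisely why the continuation must be carried out one coordinate at a time, and that ``values in the closed subspace $\mathbb{A}$'' must first be converted, via annihilating functionals, into the scalar-valued hypotheses to which Theorem \ref{T:AC} applies.
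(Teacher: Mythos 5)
Your proposal is correct and follows essentially the same route as the paper's proof: identify $\mathbb{A}$ inside $\mathcal{B}(E)$, detect membership in the closed subalgebra $\mathbb{A}$ via Hahn--Banach annihilating functionals, seed with the neighborhood $U$ from Theorem \ref{T:local}, and propagate the vanishing one coordinate at a time using the separate real analyticity of $\Lambda$ (Corollary \ref{C:big}) together with the Identity Theorem \ref{T:AC}. The only difference is cosmetic: the paper argues by contradiction with a single separating functional at each inductive stage, whereas you run the induction directly over all of $\mathbb{A}^{\perp}$, stating the same argument somewhat more cleanly.
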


\begin{proof}  We consider the map $\Lambda$ defined on the cone of positive elements in $\mathcal{B}(E)$ restricted to
$\Omega$,  $\Lambda: \Omega^n\to \mathcal{B}(E)$.  Suppose that $a_1,\ldots, a_{n-1}\in U$, where $U$ is given in
Theorem \ref{T:local}, $b\in\Omega$, but $z=\Lambda(a_1,\ldots, a_{n-1},b)\notin \Omega$,
hence not in $\mathbb{A}$ (since an element of $\mathbb{A}$ is a positive invertible  
in $\mathbb{A}$ if and only if is in $\mathcal{B}(E)$).  By the Hahn-Banach theorem there exists a continuous linear functional
$\pi$ defined on $\mathcal{B}(E)$ such that $\pi(\mathbb{A})=0$ and $\pi(z)\ne 0$.  By Lemma \ref{L:closure}, however, 
$\pi(\Lambda(a_1,\ldots, a_{n-1},a)=0$ for all $a\in U$. Thus the composition $a\mapsto \pi(\Lambda(a_1,\ldots, a_{n-1},a)$
 is a real analytic function on $\Omega$ which is $0$ on $U$. Hence by Theorem \ref{T:AC} it agrees with the $0$ function everywhere,
 which contradicts $\pi(\Lambda(a_1,\ldots, a_{n-1},b))\ne 0$.
 
 We next consider the case $a_1,\ldots, a_{n-2}\in U$, $a_n\in\Omega$, and $\Lambda(a_1,\ldots,a_{n-2},b, a_n)\notin \Omega$.
 We proceed along the lines of the preceding paragraph, noting from the previous paragraph that 
 $\Lambda(a_1,\ldots,a_{n-2},a, a_n)\in \Omega$ for all $a\in U$, and again derive a contradiction.  We can then proceed inductively
 through all the coordinates, and derive the assertion of the lemma.
\end{proof}

With the aid of Lemma \ref{L:closure} we can restrict the results of the earlier section to $\mathbb{A}$ and derive existence and
uniqueness of the Karcher mean in an arbitrary unital $C^*$-algebra. 
\begin{theorem}\label{T:gencstar}
In any unital $C^*$-algebra $\mathbb{A}$ the Karcher mean and the Karcher barycenter maps are defined on $\Omega$,
the cone of positive invertible elements.  In each case the Karcher mean or barycenter is the unique solution of the corresponding
Karcher equation.
\end{theorem}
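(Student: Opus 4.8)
The plan is to leverage the Gelfand--Naimark identification of $\mathbb{A}$ with a norm-closed unital $*$-subalgebra of some $\mathcal{B}(E)$, and to transport the fully developed theory on the ambient cone $\widetilde{\Omega}$ of positive invertibles of $\mathcal{B}(E)$ down to $\Omega=\widetilde{\Omega}\cap\mathbb{A}$, the cone of $\mathbb{A}$. The crucial input is Lemma \ref{L:closure}, which says $\Omega$ is closed under the Karcher mean $\Lambda$ of $\mathcal{B}(E)$; with this in hand, the remaining work is the careful verification that all relevant structure restricts. First I would record that $(\Omega,d)$ is complete in the Thompson metric: since the Thompson topology coincides with the relative norm topology and $\mathbb{A}$ is norm-closed in $\mathcal{B}(E)$, $\Omega$ is a closed subset of the complete space $(\widetilde{\Omega},d)$, hence complete. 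I would also note that for $x,a\in\Omega$ the element $\log(x^{-1/2}ax^{-1/2})$ lies in $\mathcal{H}(\mathbb{A})$, because the continuous functional calculus keeps us inside the subalgebra; consequently the finite Karcher equation (\ref{E:KM0}) and the integral Karcher equation (\ref{E:karch1}) have exactly the same meaning whether read in $\mathbb{A}$ or in $\mathcal{B}(E)$, the Bochner integral of an $\mathbb{A}$-valued integrand again lying in $\mathbb{A}$ since $\mathbb{A}$ is closed.

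For the Karcher mean itself, Lemma \ref{L:closure} shows that the $\mathcal{B}(E)$-mean restricts to a map $\Lambda:\Omega^n\to\Omega$. This restriction is again symmetric, iterative, and subadditive, since these identities and inequalities hold on the larger $\widetilde{\Omega}\supseteq\Omega$; so $\Lambda$ is a subadditive intrinsic mean on $\Omega$. By construction $\Lambda(a_1,\ldots,a_n)$ solves (\ref{E:KM0}), and it is the \emph{unique} solution lying in $\Omega$: any solution in $\Omega\subseteq\widetilde{\Omega}$ would be a solution of the same equation in $\widetilde{\Omega}$, where the Karcher mean is the unique solution \cite{LL14}. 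This disposes of the mean and its uniqueness.

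For the barycenter I would feed the subadditive intrinsic mean on the complete space $(\Omega,d)$ into Proposition \ref{P:extend}, obtaining a contractive barycentric map $\Lambda:\mathcal{P}^1(\Omega)\to\Omega$. The point to check is that this map is simply the restriction to $\mathcal{P}^1(\Omega)$ of the $\mathcal{B}(E)$-barycenter: the Thompson metric on $\Omega$ is the restriction of that on $\widetilde{\Omega}$, so for measures supported in $\Omega$ the two Wasserstein metrics agree (every coupling is supported in $\Omega\times\Omega$); the two barycenters agree on the dense set $\mathcal{P}_0(\Omega)$, where both return the Karcher mean of the support; and both are continuous, so they agree on all of $\mathcal{P}^1(\Omega)$. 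With this identification, Proposition \ref{P:Kexists} gives that $\Lambda(\mu)$ satisfies (\ref{E:karch1}), and Theorem \ref{T:uniq} gives that it is the unique solution in $\widetilde{\Omega}$, hence a fortiori the unique solution in $\Omega$.

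The substantive mathematics has already been spent in Lemma \ref{L:closure} (the analytic-continuation argument via Theorem \ref{T:AC}), so I do not expect a genuine obstacle here; the proof is an assembly. The one place demanding care is the transfer of the Karcher equation and its uniqueness between the two algebras, namely the observations that the integrand and its Bochner integral stay in $\mathbb{A}$ and that the $\mathbb{A}$-barycenter and the restricted $\mathcal{B}(E)$-barycenter coincide. Once these compatibilities are in place, existence, membership in $\Omega$, and uniqueness for both the mean and the barycenter follow immediately from the corresponding $\mathcal{B}(E)$-statements.
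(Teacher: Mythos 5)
Your proposal is correct and follows essentially the same route as the paper: identify $\mathbb{A}$ with a closed unital $C^*$-subalgebra of $\mathcal{B}(E)$ via Gelfand--Naimark, use Lemma \ref{L:closure} to see the mean restricts to $\Omega$, extend to $\mathcal{P}^1(\Omega)$ by contractivity and density, and invoke Proposition \ref{P:Kexists} and Theorem \ref{T:uniq} for existence and uniqueness of the equation's solution. Your write-up is in fact somewhat more careful than the paper's, making explicit the compatibility checks (the integrand and Bochner integral staying in $\mathbb{A}$, agreement of the Wasserstein metrics and of the two barycenters, completeness of $(\Omega,d)$) that the paper leaves implicit.
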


\begin{proof} As remarked before the statement of the theorem, the Karcher mean case follows from our earlier results.
The means may be reinterpreted as a contractive barycenter map on $\Pro_0(\Omega)$ (as in \cite{LL16}), which extends
uniquely to a contractive barycenter map on $\Pro^1(\Omega)$. By Proposition \ref{P:Kexists} the contractive barycentric map
sends a member of $\Pro^1(\Omega)$ to its Karcher barycenter.  Since the image of $\Pro_0(\Omega)$ is dense in the image
of $\Pro^1(\Omega)$, the latter image will be contained in the positive invertible elements of $\mathbb{A}$, that is, in $\Omega$.
The Karcher equation of $\mathbb{A}$ is the restriction of the one for $\mathcal{B}(E)$, so existence and uniqueness follow 
from the earlier corresponding results for $\mathcal{B}(E)$.

\end{proof}

These results on the existence and uniqueness of the Karcher mean have been obtained  earlier by Lim and Palfia \cite{LP} by entirely 
different methods.   They show based on Crandall-Liggett type arguments that the Karcher mean is the norm limit of the power means, 
which establishes the existence of the Karcher mean in an arbitrary unital $C^*$-algebra.  The author views the approach given here as
more elementary and computationally less intense.

We also have the following variant of Corollary \ref{C:big}.
\begin{corollary}\label{C:big2}
Let $\Omega$ be the cone of invertible positive elements in a unital  $C^*$-algebra $\mathbb{A}$.
For fixed $a_1,\ldots, a_{n-1}\in \Omega$ the function $\lambda(a)=\Lambda(a_1,\ldots, a_{n-1},a)$
is an analytic diffeomorphism on $\Omega$.
\end{corollary}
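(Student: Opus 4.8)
The plan is to run the proof of Corollary \ref{C:big} essentially verbatim, after checking that every ingredient it used survives the passage from $\mathcal{B}(E)$ to the general unital $C^*$-algebra $\mathbb{A}$. Throughout I identify $\mathbb{A}$ with a closed unital $C^*$-subalgebra of some $\mathcal{B}(E)$ via Gelfand-Naimark, and I write $\Omega$ for the cone of positive invertibles of $\mathbb{A}$, noting that $\Omega=\Omega_{\mathcal{B}(E)}\cap\mathbb{A}$ since positivity and invertibility of an element of $\mathbb{A}$ are the same whether computed in $\mathbb{A}$ or in $\mathcal{B}(E)$.

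First I would verify that both maps in play are endomaps of $\Omega$. The map $\gamma(x)=x^{1/2}\exp(-\sum_{i=1}^{n-1}\log(x^{-1/2}a_ix^{-1/2}))x^{1/2}$ is built entirely from the functional-calculus operations square root, $\exp$, $\log$, and multiplication, all of which preserve the $C^*$-subalgebra $\mathbb{A}$; hence $\gamma:\Omega\to\Omega$ is well-defined, and it is real analytic because $\exp:\mathcal{H}(\mathbb{A})\to\Omega$ and $\log:\Omega\to\mathcal{H}(\mathbb{A})$ are inverse real analytic diffeomorphisms on any unital $C^*$-algebra. That $\lambda(a)=\Lambda(a_1,\ldots,a_{n-1},a)$ maps $\Omega$ into $\Omega$ is exactly the content of Lemma \ref{L:closure} and Theorem \ref{T:gencstar}; this is the one place where the general-algebra version genuinely requires the work of Section 7, since a priori $\Lambda$ was only known to land in the larger cone of $\mathcal{B}(E)$.

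Next I would observe that $\gamma$ and $\lambda$ remain mutually inverse on $\Omega$. The equational argument of Lemma \ref{L:KM1} is purely algebraic and takes place inside $\mathbb{A}$: substituting $a=\gamma(x)$ into the Karcher equation produces a genuine solution, and uniqueness of the Karcher mean in $\mathbb{A}$ (Theorem \ref{T:gencstar}) forces $\lambda(\gamma(x))=x$ and, symmetrically, $\gamma(\lambda(a))=a$, just as in Lemma \ref{L:KM4}. The local Lipschitz estimate of Lemma \ref{L:lips} transfers without change as well: the inequality $d(x,y)\le\log(1+M\Vert x-y\Vert)\le M\Vert x-y\Vert$ uses only the order structure and Thompson metric of $\mathbb{A}$, and the contractivity of $\Lambda$, hence of $\lambda$, for the Thompson and Wasserstein metrics holds in every unital $C^*$-algebra by \cite{LL16} together with Theorem \ref{T:gencstar}. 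Thus $\lambda:(\Omega,\Vert\cdot\Vert)\to(\Omega,d)$ is locally Lipschitz.

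With these pieces assembled, I would conclude exactly as in Corollary \ref{C:big}: $\gamma$ and $\lambda$ are inverse homeomorphisms of the open subset $\Omega$ of the Banach space $\mathcal{H}(\mathbb{A})$, $\gamma$ is of class $C^\omega$, and $\lambda$ is locally Lipschitz, so Theorem \ref{T:conv} with $p=\omega$ gives that the Fr\'echet derivative of $\gamma$ is everywhere invertible and that $\lambda$ is real analytic, whence $\lambda$ is an analytic diffeomorphism of $\Omega$. I expect no serious obstacle: the only substantive new input beyond Corollary \ref{C:big} is the closure statement of Lemma \ref{L:closure}, and the remaining effort is simply the bookkeeping of confirming that the earlier $\mathcal{B}(E)$-specific lemmas were in fact established using only structure common to all unital $C^*$-algebras.
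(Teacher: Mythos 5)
Your proof is correct, and its skeleton---the Gelfand--Naimark embedding, Lemma \ref{L:closure} to see that $\lambda$ maps $\Omega$ into $\Omega$, the inverse pair $\gamma$, $\lambda$, and the converse of the inverse function theorem---matches the paper's. The one place you diverge is at the end: the paper never re-runs Theorem \ref{T:conv} inside $\mathcal{H}(\mathbb{A})$. Its proof is a pure restriction argument: $\gamma$ preserves $\Omega_{\mathbb{A}}$ directly from Definition \ref{D:KM3}, $\lambda$ does so by Lemma \ref{L:closure}, the two maps are inverse on the cone of $\mathcal{B}(E)$ by Lemma \ref{L:KM4} and hence remain so after restriction, and the analyticity is simply quoted from Corollary \ref{C:big}: the analytic map $\lambda$ on $\Omega_{\mathcal{B}(E)}$, restricted to the open subset $\Omega_{\mathbb{A}}$ of the closed subspace $\mathcal{H}(\mathbb{A})$ and known to take values in $\mathcal{H}(\mathbb{A})$, stays real analytic. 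Your intrinsic reconstruction---verifying that $\gamma$ is analytic by functional calculus within $\mathbb{A}$, that the inverse relation and the Lipschitz estimate of Lemma \ref{L:lips} hold verbatim in $\mathbb{A}$, and then applying Theorem \ref{T:conv} with $p=\omega$ in the Banach space $\mathcal{H}(\mathbb{A})$---is valid, and it buys a little extra: it yields invertibility of the Fr\'echet derivative of $\gamma$ as an operator on $\mathcal{H}(\mathbb{A})$ itself, which is not literally immediate from restricting Corollary \ref{C:big} (though it does follow from the chain rule once both restricted maps are known to be analytic). The cost is duplicated verification that the paper dispatches in three lines; in particular, your appeal to contractivity of $\Lambda$ in a general unital $\mathbb{A}$ is cleanest if phrased as inherited by restriction from $\mathcal{B}(E)$, since by spectral permanence the order, and hence the Thompson metric, on $\Omega_{\mathbb{A}}$ is the restriction of that on $\Omega_{\mathcal{B}(E)}$.
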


\begin{proof} As previously we assume by the Gelfand-Naimark theorem the $C^*$-algebra $\mathbb{A}$ is 
isometrically $*$-isomorphic to a closed $C^*$-subalgebra 
containing the identity of a $C^*$-algebra $\mathcal{B}(E)$ of bounded operators on a Hilbert space.
Directly from its definition (Definition \ref{D:KM3}) $\gamma$ carries $\Omega$ into $\Omega$ and it follows
from Lemma \ref{L:closure} it follows that $\lambda$ also carries $\Omega$ into $\Omega$.  From
Lemma \ref{L:KM4} $\gamma$ and $\lambda$ are inverse maps on the cone of positive invertible operators
in $\mathcal{B}(E)$, the same holds for their restrictions to $\Gamma$.  The analyticity now follows from
Corollary \ref{C:big}.
\end{proof}

\section{Properties of the Karcher mean and barycenter in general $C^*$-algebras}
In this section we work in the general setting of a unital $C^*$-algebra $\mathbb{A}$.  We denote the closed subspace 
$\mathcal{H}(\mathbb{A})$ of hermitian elements by $H$, and the cone of positive invertible elements by $\Omega$, an open
cone in $H$.  For  a \emph{weight} $\omega=(w_{1},\dots,w_{n})$ with entries non-negative real numbers summing to $1$ and 
 $\Ba=(a_1,\ldots, a_n)\in\Omega^n$, we consider the  corresponding Karcher equation on $\Omega$
\begin{equation}\label{E:least1}
\sum_{i=1}^{n}w_{i}\log(x^{1/2}a_{i}^{-1}x^{1/2})=0,\end{equation}
with solution the Karcher mean $x=\Lambda(\omega, \Ba)$.  By Theorem \ref{T:gencstar} the Karcher equation has a unique solution in $\Omega$.  

In \cite[Theorem 6.8]{LL14} several basic properties of the Karcher mean were derived in the setting of $\mathbb{A}=\mathcal{B}(E)$,
 the $C^*$-algebra of bounded linear operators on a Hilbert space $E$. 
These properties can be carried over to a general unital $C^*$-algebra $\mathbb A$  by first 
embedding $\mathbb{A}$ into some $\mathcal{B}(E)$ as a closed $C^*$-subalgebra (by the
 Gelfand-Naimark theorem), noting that the Karcher mean $\Lambda$ of $\mathcal{B}(E)$ restricted to the embedded $\mathbb{A}$
 is the Karcher mean of $\mathbb{A}$, and hence the equalities and inequalities of the following theorem that hold
 in $\mathcal{B}(E)$ remain valid in $\mathbb{A}$.

\begin{theorem}\label{T:props}
For a weight $\omega=(w_1,...w_n)$  and $\Ba=
(a_1,\ldots,a_n)\in \Omega^n$, the following properties hold.
\begin{itemize}
\item[(P1)] $($Consistency with scalars$)$
$ \Lambda(\omega;\Ba)=a_{1}^{w_{1}}\cdots a_{n}^{w_{n}}$ if the
$a_{i}$'s commute;
\item[(P2)] $($Joint homogeneity$)$
$ \Lambda(\omega; (t_{1}a_{1}, \dots, t_{n}a_{n}))= t_{1}^{w_{1}}\cdots
t_{n}^{w_{n}}\Lambda(\omega;\Ba)$, all $t_i>0$;
\item[(P3)] $($Permutation invariance$)$
$ \Lambda(\omega_{\sigma};\Ba_{\sigma}) =\Lambda(\omega;\Ba)$,
where $\omega_{\sigma}=(w_{\sigma(1)},\dots,w_{\sigma(n)});$
\item[(P4)] $($Monotonicity$)$ If $b_{i}\leq a_{i}$ for all $1\leq i\leq n,$ then
$\Lambda(\omega;\Bb)\leq \Lambda(\omega;\Ba);$
\item[(P5)] $($Continuity$)$ $\Lambda$ is
contractive for the Thompson metric$;$
\item[(P6)] $($Congruence invariance$)$
$\Lambda(\omega;m^{*}\Ba m)= m^{*}\Lambda(\omega;\Ba) m$ for
any invertible $m;$
\item[(P7)] $($Joint concavity$)$ $\Lambda(\omega;\lambda \Ba+(1-\lambda)\Bb)\geq \lambda \Lambda(\omega;\Ba)+(1-\lambda)\Lambda(\omega;\Bb)$ for\, $0\leq \lambda \leq 1$;
\item[(P8)] $($Self-duality$)$
$ \Lambda(\omega;a_{1}^{-1} ,\dots, a_{n}^{-1})^{-1}=
\Lambda(\omega;a_{1} ,\dots, a_{n});$ and
\item[(P9)] $($AGH weighted  mean inequalities$)$
$ (\sum_{i=1}^{n}w_{i}a_{i}^{-1})^{-1}\leq \Lambda(\omega;\Ba)
\leq \sum_{i=1}^{n}w_{i}a_{i}.$
\end{itemize}
\end{theorem}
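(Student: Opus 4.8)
The plan is to reduce every assertion to the corresponding statement in $\mathcal{B}(E)$, where all nine properties are already established in \cite[Theorem 6.8]{LL14}, by transporting them along the Gelfand--Naimark embedding. First I would fix, as in the discussion preceding the theorem, an isometric $*$-isomorphism identifying $\mathbb{A}$ with a closed $C^*$-subalgebra of some $\mathcal{B}(E)$ containing the identity. The decisive structural input is Lemma \ref{L:closure} together with Theorem \ref{T:gencstar}: for a weight $\omega$ and $\Ba\in\Omega^n$ (with $\Omega$ the cone of $\mathbb{A}$), the weighted Karcher mean $\Lambda(\omega;\Ba)$ is the barycenter of the corresponding finitely supported measure and hence, computed in $\mathcal{B}(E)$, again lies in $\Omega$. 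Thus the Karcher mean of $\mathbb{A}$ is exactly the restriction to $\Omega^n$ of the Karcher mean of $\mathcal{B}(E)$. This is what makes the transfer legitimate and is in effect the only nontrivial ingredient; the remaining work is bookkeeping.

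Next I would record that all the auxiliary operations appearing in (P1)--(P9) preserve membership in $\mathbb{A}$. Products and real powers of mutually commuting positive elements lie in $\mathbb{A}$ by the continuous functional calculus, so the right-hand side of (P1) and the scalar prefactors of (P2) stay in $\mathbb{A}$; scalar multiples, finite sums, inverses, and convex combinations of elements of $\Omega$ remain in $\Omega$, covering (P2), (P7), (P8) and the arithmetic and harmonic means in (P9); and for $m$ invertible in $\mathbb{A}$ the congruence $m^{*}\Ba m$ lies in $\Omega^n$, covering (P6). I would also note that the partial order and the Thompson metric on the cone of $\mathbb{A}$ are the restrictions of those on the cone of $\mathcal{B}(E)$: positivity is preserved by a $*$-isomorphism, so $x\le y$ holds in $\mathbb{A}$ if and only if it holds in $\mathcal{B}(E)$, and since the Thompson metric is defined purely from the order the two metrics agree on $\Omega$. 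This takes care of the ordered and metric statements (P4), (P5) and the inequalities in (P7) and (P9).

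With these observations the proof becomes immediate: for each property, every element named on either side of the displayed identity or inequality already lies in $\mathbb{A}$, the relation itself is valid in $\mathcal{B}(E)$ by \cite[Theorem 6.8]{LL14}, and the order and metric used to phrase it are inherited from $\mathcal{B}(E)$; hence the relation holds verbatim in $\mathbb{A}$. I would run through (P1)--(P9) in turn, in each case citing closure of $\mathbb{A}$ under the relevant operation and then invoking the $\mathcal{B}(E)$ identity, with (P3) being trivial since a permutation of the arguments does not leave $\mathbb{A}$.

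I do not anticipate a genuine obstacle: the substance of the theorem was established in $\mathcal{B}(E)$ in \cite{LL14}, and the one place where the argument could break down---the possibility that the Karcher mean of elements of $\Omega$ escapes $\mathbb{A}$---has already been excluded by Lemma \ref{L:closure}. If any delicacy remains, it is purely in checking, property by property, that no operation silently leaves $\mathbb{A}$; the congruence in (P6) and the functional-calculus powers in (P1) are the two spots worth an explicit word.
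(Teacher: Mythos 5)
Your proposal is correct and takes essentially the same route as the paper: identify $\mathbb{A}$ with a closed unital $C^*$-subalgebra of some $\mathcal{B}(E)$ via Gelfand--Naimark, use Lemma \ref{L:closure} (with Theorem \ref{T:gencstar}) to conclude that the Karcher mean of $\mathcal{B}(E)$ restricted to $\Omega^n$ is the Karcher mean of $\mathbb{A}$, and then transfer (P1)--(P9) from \cite[Theorem 6.8]{LL14}. Your additional bookkeeping---checking that each auxiliary operation stays in $\mathbb{A}$ and that the order and Thompson metric on the cone of $\mathbb{A}$ are the restrictions of those on $\mathcal{B}(E)$---is precisely what the paper leaves implicit in its brief transfer argument.
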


Some the these properties extend naturally to the Karcher barycenter map $\Lambda:\Pro^1(\Omega)\to \Omega$.  The following items are 
key observations for these extensions.

\begin{definition}
A Lipschitz function $F:\Omega\to \Omega$ \emph{preserves the Karcher barycenter} $\Lambda: \Pro^1(\Omega)\to \Omega$ if 
for all $\mu\in\Pro^{1}(\Omega)$, $F(\Lambda(\mu))=\Lambda(F_*(\mu)$, where $F_*(\mu)$ is the pushforward of $\mu$.  
Similarly $F$ \emph{preserves the Karcher mean}  if for any $n\geq 2$, $x_1,\ldots, x_n\in \Omega$,
$F(\Lambda(x_1,\ldots,x_n))=\Lambda(F(x_1),\ldots,F(x_n))$.
\end{definition}

\begin{remark}\label{R:pres}
It is easy to verify that $F$ preserves the Karcher mean if and only if $F$ preserves the Karcher barycenter for all $\mu\in\Pro_0(\Omega)$.
\end{remark}

\begin{lemma}\label{L:bary}
A Lipschitz map $F:\Omega\to\Omega$ preserves the Karcher mean if and only if it preserves the Karcher barycenter on $\Pro^1(\Omega)$.
\end{lemma}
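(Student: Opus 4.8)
The plan is to prove the two implications separately; the reverse is immediate from Remark~\ref{R:pres}, while the forward direction is a density-plus-continuity argument whose one substantive ingredient is the continuity of the pushforward map $F_*$ on Wasserstein space. Throughout I take the Lipschitz hypothesis on $F$ to be with respect to the Thompson metric $d$, write $L$ for a Lipschitz constant, and use freely that $\Lambda$ is contractive from $d_w$ to $d$, that $F$ is in particular continuous, and that $\Pro_0(\Omega)$ is $d_w$-dense in $\Pro^1(\Omega)$.

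For the reverse direction, suppose $F$ preserves the Karcher barycenter on all of $\Pro^1(\Omega)$. Since $\Pro_0(\Omega)\subseteq\Pro^1(\Omega)$, in particular $F(\Lambda(\eta))=\Lambda(F_*(\eta))$ for every $\eta\in\Pro_0(\Omega)$, so $F$ preserves the Karcher barycenter on $\Pro_0(\Omega)$ and hence, by Remark~\ref{R:pres}, preserves the Karcher mean. No further argument is needed here.

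For the forward direction, assume $F$ preserves the Karcher mean; by Remark~\ref{R:pres} this gives $F(\Lambda(\eta))=\Lambda(F_*(\eta))$ for all $\eta\in\Pro_0(\Omega)$. Fix $\mu\in\Pro^1(\Omega)$ and choose $\mu_n\in\Pro_0(\Omega)$ with $d_w(\mu_n,\mu)\to 0$. I would obtain each side of the desired identity for $\mu$ as the limit of the corresponding side for $\mu_n$. On the left, contractivity of $\Lambda$ gives $\Lambda(\mu_n)\to\Lambda(\mu)$ and continuity of $F$ then gives $F(\Lambda(\mu_n))\to F(\Lambda(\mu))$. On the right, once I know $F_*(\mu_n)\to F_*(\mu)$ in $d_w$, contractivity of $\Lambda$ yields $\Lambda(F_*(\mu_n))\to\Lambda(F_*(\mu))$. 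Since $F(\Lambda(\mu_n))=\Lambda(F_*(\mu_n))$ for every $n$, letting $n\to\infty$ delivers $F(\Lambda(\mu))=\Lambda(F_*(\mu))$, which is the claim.

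The step requiring genuine care, and where the Lipschitz hypothesis enters, is verifying both that $F_*$ maps $\Pro^1(\Omega)$ into itself and that it is $d_w$-Lipschitz. For membership, fix $x_0\in\Omega$ and estimate $\int_\Omega d(F(a),F(x_0))\,d\mu(a)\le L\int_\Omega d(a,x_0)\,d\mu(a)<\infty$, so $F_*(\mu)\in\Pro^1(\Omega)$. For the Lipschitz estimate, given any $\pi\in\Pi(\mu,\nu)$ the measure $(F\times F)_*\pi$ is a coupling of $F_*(\mu)$ and $F_*(\nu)$, and by change of variables
$$\int d(u,v)\,d\big[(F\times F)_*\pi\big](u,v)=\int d(F(a),F(b))\,d\pi(a,b)\le L\int d(a,b)\,d\pi(a,b);$$
taking the infimum over $\pi$ gives $d_w(F_*(\mu),F_*(\nu))\le L\,d_w(\mu,\nu)$. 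Applying this to $\mu_n$ and $\mu$ supplies the convergence $F_*(\mu_n)\to F_*(\mu)$ used above. This Lipschitz-continuity of the pushforward on Wasserstein space is the only real obstacle; the remainder is routine assembly of continuity of $F$, contractivity of $\Lambda$, and density of $\Pro_0(\Omega)$.
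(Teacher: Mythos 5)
Your proof is correct and follows essentially the same route as the paper's: the converse via Remark~\ref{R:pres}, and the forward direction by density of $\Pro_0(\Omega)$, contractivity of $\Lambda$, and continuity of the pushforward map $F_*$ on $\Pro^1(\Omega)$. The only difference is that you supply the coupling argument showing $F_*$ is $d_w$-Lipschitz and maps $\Pro^1(\Omega)$ into itself, a step the paper merely asserts from the Lipschitz hypothesis.
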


\begin{proof}
Suppose $F$ preserves the Karcher mean.  By Remark \ref{R:pres} $F$ preserves the Karcher barycenter on $\Pro_0(\Omega)$.
By the density of $\Pro_0(\Omega)$ in $\Pro^1(\Omega)$, there exists a sequence $\{\mu_n\}\subseteq \Pro_0(\Omega)$ converging
to $\mu$.  Since $F$ is Lipschitz, the pushforward map $F_*:\Pro^1(\Omega)\to \Pro^1(\Omega)$ is continuou, so
$F_*(\mu_n)\to F_*(\mu)$.   By continuity of the barycentric map $\Lambda$, $\Lambda(F_*(\mu_n))\to \Lambda(F_*(\mu))$.
By hypothesis and continuity of $\Lambda$ and 
$$ F_*(\Lambda(\mu_n))=\Lambda(F_*(\mu_n))\to \Lambda(F_*(\mu))\mbox{ and } F_*(\Lambda(\mu_n))\to F_*(\Lambda(\mu)),$$
so $\Lambda(F_*(\mu))=F_*(\Lambda(\mu))$.  

In light of Remark \ref{R:pres}, the converse is immediate.
\end{proof}

\begin{theorem}\label{T:baryprops} Let $\mu\in\Pro^1(\Omega)$.  The barycenter map $\Lambda: \Pro^1(\Omega)\to \Omega$ satisfies the following properties.
\begin{itemize}
\item[(P1)] $\Lambda(t_*\mu)=t\Lambda(\mu)$, where $t_*\mu$ is the pushforward of $\mu$ under $a\mapsto ta$, $t>0$;
\item[(P2)] If $\mu(U)\leq \nu(U)$ for all open upper sets $U=\ua U=\{y:\exists x\in U,\, x\leq y\}$, then $\Lambda(\mu)\leq \Lambda(\nu)$;
\item[(P3)] The barycentric map $\Lambda$ is contractive from $\Pro^1(\Omega)$ with the Wasserstein metric to $\Omega$ with
the Thompson metric;
\item[(P4)] For $m\in \mathbb{A}$ invertible, $\Lambda(m^*\mu  m)=m^*\Lambda(\mu)m$, where $m^*\mu m $ is the pushforward of $\mu$ under $a\mapsto m^*am$;
\item[(P5)]  $\Lambda(\lambda\mu+(1-\lambda)\nu)\geq \lambda\Lambda(\mu)+(1-\lambda)\Lambda(\nu)$ for $0\leq \lambda\leq 1$;
\item[(P6)]  $\Lambda(J_*(\mu))=(\Lambda(\mu))^{-1}$, where $J(a)=a^{-1}$;
\item[(P7)]  $\mathcal{H}(\mu)\leq \Lambda(\mu)\leq \mathcal{A}(\mu)$, where $\mathcal{H}$ is the harmonic barycenter map and $\mathcal{A}$ is
the arithmetic barycenter map, and $\mu$ is chosen in the intersection of the three barycenter domains.

\end{itemize}
\end{theorem}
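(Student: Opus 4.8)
The plan is to obtain every item as the $\Pro^1(\Omega)$-extension of the corresponding finitely supported property already recorded in Theorem \ref{T:props}, pushing each statement from $\Pro_0(\Omega)$ to $\Pro^1(\Omega)$ by means of the density of $\Pro_0(\Omega)$ in $\Pro^1(\Omega)$ together with the continuity (indeed contractivity, which is precisely (P3)) of the barycenter $\Lambda$. Two distinct mechanisms carry the limit: for the equivariance identities I would invoke Lemma \ref{L:bary}, while for the order inequalities I would use that the cone order $\leq$ on $H$ is closed, so that inequalities survive passage to a limit. Property (P3) itself requires no work beyond the construction, being exactly the contractive extension furnished by Proposition \ref{P:extend} and Theorem \ref{T:gencstar}.

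For the equivariance items (P1), (P4) and (P6) I would treat the three transformations uniformly. The dilation $F(a)=ta$, the congruence $F(a)=m^{*}am$, and the inversion $J(a)=a^{-1}$ are each isometries of $(\Omega,d)$ for the Thompson metric, hence in particular Lipschitz. By the finitely supported identities of Theorem \ref{T:props}---property (P2) specialized to equal scalars $t_i=t$ (so that $t_1^{w_1}\cdots t_n^{w_n}=t$), property (P6), and property (P8), respectively---each of these maps preserves the Karcher mean in the sense defined just before Lemma \ref{L:bary}. Lemma \ref{L:bary} then upgrades mean preservation to barycenter preservation on all of $\Pro^1(\Omega)$, which is precisely the assertions $\Lambda(t_*\mu)=t\Lambda(\mu)$, $\Lambda(m^{*}\mu m)=m^{*}\Lambda(\mu)m$, and $\Lambda(J_*\mu)=\Lambda(\mu)^{-1}$.

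The remaining items (P5) and (P7) are inequalities handled by approximation and the closedness of $\leq$. For (P7) I would observe that on $\Pro_0(\Omega)$ the arithmetic barycenter $\mathcal{A}(\mu)$ is the weighted arithmetic mean (Example \ref{Ex:Banach}) and $\mathcal{H}(\mu)$ the weighted harmonic mean, so that property (P9) of Theorem \ref{T:props} gives $\mathcal{H}(\mu)\leq\Lambda(\mu)\leq\mathcal{A}(\mu)$ for finitely supported $\mu$; since $\mathcal{A}$, $\mathcal{H}$ and $\Lambda$ are each continuous and $\leq$ is closed, the bound extends to any $\mu$ lying in all three domains. Property (P5) is handled in the same spirit: I would approximate $\mu$ and $\nu$ by finitely supported measures, invoke the corresponding finitely supported concavity inequality (property (P7) of Theorem \ref{T:props}), and pass to the limit using continuity of $\Lambda$ and closedness of $\leq$.

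The genuinely delicate item, and the step I expect to be the main obstacle, is the monotonicity (P2), whose hypothesis $\mu(U)\leq\nu(U)$ for every open upper set $U$ is a stochastic-dominance condition rather than a coordinatewise one. Here the finite input (P4) of Theorem \ref{T:props} compares $\Lambda(\omega;\Ba)$ and $\Lambda(\omega;\Bb)$ only when $a_i\leq b_i$ for each $i$, so I would first use Strassen's theorem to produce a coupling $\pi\in\Pi(\mu,\nu)$ concentrated on the closed set $G=\{(x,y):x\leq y\}$. I would then approximate $\pi$ by finitely supported measures supported within $\mathrm{supp}(\pi)\subseteq G$ (Proposition \ref{P:support1} and Remark \ref{R:simple}); each such approximant is a coupling of finitely supported marginals $\mu_k,\nu_k$ that, being carried by $G$, admit a coordinatewise comparison and hence satisfy $\Lambda(\mu_k)\leq\Lambda(\nu_k)$ by (P4). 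Since the marginals converge in the Wasserstein metric, continuity of $\Lambda$ and closedness of $\leq$ then yield $\Lambda(\mu)\leq\Lambda(\nu)$. The subtle point throughout is the simultaneous control of the support of the approximating couplings (so that the order comparison persists) and the Wasserstein convergence of their marginals; upgrading the Prohorov-type approximation of Proposition \ref{P:support1} to genuine Wasserstein convergence of the marginals, within the first-moment class $\Pro^1$, is where the real care is required.
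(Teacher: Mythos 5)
Your overall scheme---density of $\Pro_0(\Omega)$ in $\Pro^1(\Omega)$, contractivity of $\Lambda$, Lemma \ref{L:bary} for the equivariance identities, and closedness of the order for the inequalities---is the paper's, and your handling of (P1), (P3), (P4), (P5), (P6) is sound (routing (P1) through Lemma \ref{L:bary} rather than a direct limit, as the paper does, is a harmless variant, since $a\mapsto ta$ is indeed a Thompson isometry). The genuine gap is in (P7). You pass to the limit ``since $\mathcal{A}$, $\mathcal{H}$ and $\Lambda$ are each continuous and $\leq$ is closed,'' treating one approximating sequence as serving all three maps. But $\Lambda$ is contractive for the Wasserstein metric arising from the \emph{Thompson} metric, while the arithmetic barycenter $\mathcal{A}$ is contractive only for the Wasserstein metric arising from the \emph{norm} metric (Example \ref{Ex:Banach}); a sequence $\mu_n\to\mu$ in $\Pro^1(\Omega,d)$ need not converge in $\Pro^1(\Omega,\Vert\cdot\Vert)$, so $\mathcal{A}(\mu_n)\to\mathcal{A}(\mu)$ is unjustified. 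This is precisely why the paper has an Appendix: Lemma \ref{L:seq} and Corollary \ref{C:two} construct a single finitely supported sequence converging to $\mu$ \emph{simultaneously} in both Wasserstein spaces, and that construction is the missing ingredient in your argument. (The paper also obtains the harmonic half by the duality $\mathcal{H}=J\mathcal{A}J_*$ together with (P6), which sidesteps a third convergence requirement that your direct approach would otherwise impose on $J_*\mu_n$.)

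For (P2) you take a genuinely different route: Strassen's theorem to produce a coupling carried by $G=\{(x,y):x\leq y\}$, followed by finitely supported approximation of the coupling within $G$. The paper instead simply cites the already-available machinery: the stochastic order is closed (\cite{HLL}, Proposition 5.4), and by Proposition 5.2 of \cite{La} any stochastically ordered pair $\mu\leq\nu$ admits finitely supported Wasserstein approximations $\mu_n\to\mu$, $\nu_n\to\nu$ with $\mu_n\leq\nu_n$, where the order on $\Pro_0(\Omega)$ is exactly the coordinatewise comparison (after permutation) needed for (P4) of Theorem \ref{T:props}. Your plan could be made to work, but as you yourself concede, its crux---upgrading the Prohorov-type approximation of Proposition \ref{P:support1} to Wasserstein convergence of the coupling while keeping the approximants inside $G$---is left undone; in addition, Strassen's theorem requires justification here, since $(\Omega,d)$ is in general nonseparable and one must first cut down to the separable supports. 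Since the cited results of \cite{La} and \cite{HLL} hand you precisely the ordered approximating sequences you are trying to manufacture, invoking them closes (P2) immediately and is the efficient repair.
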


\begin{proof}  (P1) If we apply (P2) of Theorem \ref{T:props} to $\nu=(1/n)\sum_{i=1}^n \delta_{a_i}$, identified as customary with
$(\omega,(a_1,\ldots,a_n))$ where $\omega_i=1/n$ for $i=1,\ldots, n$, we obtain property (P1) for $\nu$.  Hence if we 
choose $\mu_n$ converging to $\mu$ with each $\mu_n\in\Pro_0(\Omega)$,  we obtain (P1) for $\mu$ in  the limit.

(P2)  The first part of (P2) defines a closed order on $\Pro^1(\Omega)$, called the stochastic order. see \cite[Proposition 5.4]{HLL}.   
For $\mu\leq \nu$, there exists sequences 
$\{mu_n\},\{\nu_n\}\subseteq \Pro_0(\Omega)$  such that $\mu_n\to \mu$, $\nu_n\to \nu$ and $\mu_n\leq \mu_n$ for each $n$; see 
Proposition 5.2 of \cite{La}, where is is also shown that the stochastic order restricted to $\Pro_0(\Omega)$ agrees with the order given in
(P4) of Theorem \ref{T:props} for some permutation of coordinates.  So using (P4) we obtain in the limit $\Lambda(\mu)\leq \Lambda(\nu)$,
since the stochastic order is closed.

(P3) This has already been alluded to in the previous material.  See \cite[Section 4]{LL16} for the original treatment.

(P4)  The map $F_m:\Omega\to\Omega$ defined by $F_m(a)=m^*am$ is an order isomorphism, hence an isometry for the
Thompson metric.  So (P4) follows from (P6) of Theorem \ref{T:props} and Lemma \ref{L:bary}.

(P5) This property follows from (P7) of Theorem \ref{T:props}, with the proof proceeding along the lines of the proof for (P1).

(P6) This property follows from (P8) with a proof similar to that for (P4). (In this case the \emph{anti-isomorphism} of inversion induces an isometry of the Thompson metric.)

(P7)  Let $\mu\in\Pro^1(\Omega,d)\cap \Pro^1(\Omega,\Vert\cdot\Vert)$.  Pick a sequence 
$\{\mu_n\}$ of finitely supported measures such that $\lim_n \mu_n=\mu$ both in 
$\mu\in\Pro^1(\Omega,d)$ and in $\Pro^1(\Omega,\Vert\cdot\Vert)$; see appendix.
It then follows from the continuity of the barycenter maps $\Lambda$ and $\mathcal{A}$,
the closeness of the stochastic order, and Theorem \ref{T:props} (P9) that 
$\Lambda(\mu)\leq \mathcal{A}(\mu)$.  

Note from (P9) of Theorem \ref{T:props} that $\mathcal{H}=J\mathcal{A}J_*$, where 
$J$ is the inversion map on $\Omega$, for all finitely supported  measures.  By continuity
of $\mathcal{H}$ and density of the set of finitely supported measures the equality holds
for all $\mu$ such that $J_*(\mu)\in\Pro^1(\Omega,\Vert\cot\Vert)$.  If also $\mu\in
\Pro^1(\Omega,d)$, we conclude from the previous paragraph that 
$$(\mathcal{H}(\mu))^{-1}=(J\mathcal{A}J_*(\mu))^{-1}=\mathcal{A}J_*(\mu)
\geq \Lambda (J_*\mu).$$. Taking the inverses of the extremes yields 
$\mathcal{H}(\mu)\leq (\Lambda(J_*\mu))^{-1}=\Lambda(\mu)$ by (P6).

\end{proof}

\section{Appendix}
In this section we develop a specific construction for approximating $\mu\in\Pro^1(X)$,
where $(X,\rho)$ is a metric space, by a probability measure with finite support. Our results are 
applied in the proof of (P7) of Theorem \ref{T:baryprops}.  

Let $C$ be the support of $\mu$, a closed separable set (Proposition \ref{P:support1}).
Fix $p\in X$.  Then $\int_X \rho(a,p)\,d\mu(a)=\int_C \rho(a,p)\,d\mu(a)<\infty$. 

Let $\mathcal{E}$ be a countable collection of Borel sets, each of  diameter less then $\ve$, 
that cover $C$.  We exclude from $\mathcal{E}$ those that miss $C$, if any, and enumerate the remaining ones, obtaining $\{E_n\}$.  We refine this into a pairwise disjoint sequence $\{B_n\}$
of Borel sets that cover $C$ as follows.  Set $B_1=E_1\cap C$.  Inductively if $B_1,\ldots,B_n$
have been chosen, set $B_{n+1}=(E_m\cap C)\setminus\bigcup_{i=1}^n B_i$, where $m$
is chosen as the least index for which this set is nonempty.  In this way one
obtains a covering sequence $\{ B_n\}$ (possibly finite) of $C$ consisting 
of pairwise disjoint nonempty Borel subsets of $C$ each of diameter less than $\ve$.  

Let $A_n=\bigcup_{i=1}^n B_n$.  The characteristic functions $\chi_n$ of $A_n$ form an increasing sequence
of nonnegative functions with supremum $\chi_C$.  By Lebesgue"s dominated convergence theorem
$$\lim_n\int_X \rho(a,p) \chi_n(a)\,d\mu(a)=\int_X \rho(a,p)\chi_C(a)\,d\mu(a)=\int_X \rho(a,p)\,d\mu(a),$$
the last equality holding since $C$ is the support of $\mu$.  Thus there exists $N$ such that
$\int_{X\setminus A_n} \rho(a,p)\,d\mu(a)=\int_X \rho(a,p)\,d\mu(a)- \int_X \rho(a,p) \chi_n(a)\,d\mu(a)<\ve$ for $n\geq N$.  

Pick $x_n\in B_n$ for each $n$.   We define a probability measure $\mu_n$ with finite support
as follows.  Let $\tau_i=\mu(B_i)$ for $1\leq i\leq n$.  We define
$$\mu_n=\sum_{i=1}^n \tau_i\delta_{x_i}+ \Big(1-\sum_{i=1}^n\tau_i\Big)\delta_p.$$

\begin{lemma}\label{L:seq}
For $n\geq N$, $d_w(\mu,\mu_n) < 2\ve$.  
\end{lemma}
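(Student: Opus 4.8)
The plan is to bound $d_w(\mu,\mu_n)$ from above by exhibiting a single explicit coupling and estimating its transport cost. The natural choice is the map that collapses each piece $B_i$ (for $i\le n$) onto its chosen representative $x_i$ and sends everything outside $A_n$ to the base point $p$. First I would define $T:X\to X$ by $T(a)=x_i$ for $a\in B_i$ with $1\le i\le n$, and $T(a)=p$ for $a\in X\setminus A_n$. Since $T$ is constant on each of the pairwise disjoint Borel sets $B_1,\ldots,B_n$ and on $X\setminus A_n$, it is a simple, hence Borel measurable, function, so the pushforward $(\mathrm{id}\times T)_*\mu$ of $\mu$ under $a\mapsto(a,T(a))$ is a well-defined Borel probability measure on $X\times X$.

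Next I would check that $\pi:=(\mathrm{id}\times T)_*\mu$ belongs to $\Pi(\mu,\mu_n)$. Its first marginal is $\mu$ by construction, since $\pi(B\times X)=\mu(\{a:a\in B\})=\mu(B)$. For the second marginal one computes $T_*\mu(\{x_i\})=\mu(T^{-1}\{x_i\})=\mu(B_i)=\tau_i$ for $i\le n$, and $T_*\mu(\{p\})=\mu(X\setminus A_n)=1-\sum_{i=1}^n\tau_i$, so that $T_*\mu=\mu_n$. Thus $\pi$ is a legitimate coupling, and by the change-of-variables formula $d_w(\mu,\mu_n)\le\int_{X\times X}\rho\,d\pi=\int_X\rho(a,T(a))\,d\mu(a)$.

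The remaining step is to estimate this integral by splitting it over $A_n$ and its complement. On $A_n$, if $a\in B_i$ then both $a$ and $T(a)=x_i$ lie in $B_i$, whose diameter is less than $\ve$, so $\rho(a,T(a))<\ve$; hence $\int_{A_n}\rho(a,T(a))\,d\mu\le\ve\,\mu(A_n)\le\ve$. On $X\setminus A_n$ we have $T(a)=p$, so $\int_{X\setminus A_n}\rho(a,T(a))\,d\mu=\int_{X\setminus A_n}\rho(a,p)\,d\mu<\ve$ for $n\ge N$, by the very choice of $N$ made just before the lemma. Adding the two contributions yields $d_w(\mu,\mu_n)<2\ve$, as asserted.

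I do not anticipate a serious obstacle: the construction is an explicit transport plan whose two error terms are controlled respectively by the uniform diameter bound on the $B_i$ and by the tail estimate defining $N$. The only point requiring mild care is confirming that $T_*\mu=\mu_n$ when the representatives $x_i$ are not all distinct, or when some $x_i$ coincides with $p$; but in that case the computation above simply merges the corresponding point masses, which does not alter the resulting measure and therefore leaves the argument intact.
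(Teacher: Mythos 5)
Your proposal is correct and follows essentially the same route as the paper: the paper likewise defines the collapsing map $f_n$ (your $T$), uses the pushforward of $\mu$ under $x\mapsto(x,f_n(x))$ as an explicit coupling, and bounds the transport cost by $\ve\,\mu(A_n)+\ve<2\ve$ via the diameter bound on the $B_i$ and the tail estimate defining $N$. Your explicit verification that $T_*\mu=\mu_n$, including the case of coinciding representatives, is a minor point the paper states without detail, but the argument is the same.
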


\begin{proof}
We define $f_n:X\to X$ by $f_n(x)=x_i$ if $x\in B_i$ for $1\leq i\leq n$, and $f_n(x)=p$ otherwise.  
Note that $f_n$ is Borel measurable. We define $\mu_n$ to be the pushforward measure of $\mu$
 by $f_n$, $\mu_n=(f_n)_*(\mu)$ and note that it has support $\{x_1,\ldots,x_n,p\}$. We define $F_n:X\to X\times X$ by $F_n(x)=(x,f_n(x))$ and we let
$\pi_n=(F_n)_*(\mu)$  be the pushforward measure under $F_n$.  We note $\pi$ is a coupling 
of $\mu$ and $\mu_n$.  Using the change of variables formula we compute for $n\geq N$

\begin{eqnarray*}
\int_{X\times X} \rho(x,y) d\pi(x,y)&=&\int_{X\times X} \rho(x,y) d(F_n)_*(\mu)(x,y)\\
&=&\int_X \rho(x,f_n(x))d\mu(x)\\
&=&  \sum_{i=1}^n\int_{B_i} \rho(x,x_i)d\mu(x)+\int_{X\setminus A_n} \rho(x,p)d\mu(x)\\
&\leq& \ve\mu(A_n)+\ve<2\ve.
\end{eqnarray*}
Hence $d_w(\mu,\mu_n)<2\ve$.
\end{proof}

\begin{corollary}\label{C:two}
Let $d$ and $\rho$ be metrics on $X$ that generate the same topology.  Given $\mu\in\Pro^1(X,d)\cap\Pro^1(X,\rho)$, there exists
a sequence of finitely supported measures that converges to $\mu$ in $\Pro^1(X,d)$ and $\Pro^1(X,\rho)$,
\end{corollary}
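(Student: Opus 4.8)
The plan is to leverage Lemma \ref{L:seq} directly, constructing a single sequence of finitely supported measures that simultaneously approximates $\mu$ in both Wasserstein metrics. The key observation is that the construction preceding Lemma \ref{L:seq} is driven by a choice of covering $\mathcal{E}$ whose members have small \emph{diameter}. Since $d$ and $\rho$ generate the same topology, I would first arrange a covering of the support whose pieces are simultaneously small in both metrics, and then feed that covering into the construction so that the resulting $\mu_n$ is close to $\mu$ in both $\Pro^1(X,d)$ and $\Pro^1(X,\rho)$ at once.

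More concretely, fix $\ve>0$. Let $C$ be the support of $\mu$; note that by Proposition \ref{P:support1} the support is the same closed set for both metrics, since they induce the same topology and hence the same Borel sets and the same notion of support. I would cover $C$ by countably many Borel sets each of which has $d$-diameter less than $\ve$ \emph{and} $\rho$-diameter less than $\ve$. Such a refinement exists: start from any countable cover by small $d$-balls, and within each such ball further subdivide using small $\rho$-balls, keeping only the nonempty intersections; since both topologies agree, each point of $C$ has a neighborhood small in both metrics, and a countable subcover of these doubly-small sets covers the separable set $C$. Running the disjointification and the choice of representatives $x_n\in B_n$ exactly as in the Appendix then produces measures $\mu_n$ for which the estimate in the proof of Lemma \ref{L:seq} applies verbatim in each metric separately, because the only metric input to that estimate is the bound $\rho(x,x_i)<\ve$ on each piece $B_i$ together with the tail bound coming from integrability.

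The remaining subtlety is the tail term $\int_{X\setminus A_n}\rho(x,p)\,d\mu(x)$, which must be made small in both metrics at once. Here I would apply the dominated convergence argument of the Appendix to both integrands $\rho(\cdot,p)$ and $d(\cdot,p)$ simultaneously: since $\mu$ lies in $\Pro^1(X,d)\cap\Pro^1(X,\rho)$, both are $\mu$-integrable, and choosing $N$ large enough to control both tails lets the common $N$ serve for both metrics. With the diameters controlled in both metrics and the tails controlled in both metrics, Lemma \ref{L:seq} gives $d_w^{(d)}(\mu,\mu_n)<2\ve$ and $d_w^{(\rho)}(\mu,\mu_n)<2\ve$ for all $n\geq N$. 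Letting $\ve$ run through a sequence tending to $0$ and diagonalizing yields a single sequence of finitely supported measures converging to $\mu$ in both $\Pro^1(X,d)$ and $\Pro^1(X,\rho)$, as required.

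I expect the main obstacle to be the simultaneous diameter control: one must genuinely produce a countable Borel cover of $C$ whose pieces are small in both metrics, which requires that at each point the two metrics be comparable on some neighborhood. This follows from the topologies agreeing, but it is the place where the two metrics interact and where the argument is not a purely formal repetition of the single-metric case. Once that cover is in hand, everything else is a routine application of the Appendix construction to each metric separately and a diagonal extraction.
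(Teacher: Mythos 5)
Your proposal is correct and follows essentially the same route as the paper: choose at each point of the support a neighborhood of small diameter in \emph{both} metrics, extract a countable subcover by separability, and run the Appendix construction with that cover so that Lemma \ref{L:seq} applies simultaneously in each metric. Your explicit remark that the tail index $N$ must be chosen to control both integrands $d(\cdot,p)$ and $\rho(\cdot,p)$ at once is a point the paper leaves implicit, and your final diagonalization is an unneeded extra step (taking $\ve=1/2n$ directly, as the paper does, already produces the sequence), but neither affects correctness.
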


\begin{proof}  Let $\ve=1/2n$. For each point $x\in C$, the support of $\mu$, pick an open set $U_x$ containing $x$ such that
$U_x$ has diameter less than $\ve$ in both metrics.  In either metric $C$ is separable, so there exists a countable subcover
$\mathcal{E}$ made up of various $U_x$. Lemma \ref{L:seq} and the construction preceding it yield a finitely supported measure $\mu_n$
such that $d_w(\mu,\mu_n)<2\ve=1/n$.  This inequality holds both in $\Pro^1(X,d)$ and $\Pro^1(X,\rho)$ since members of the
countable cover had diameter less than $\ve$ with respect to both metrics. The sequence $\{\mu_n\}$ attained in this fashion
is the one desired.
\end{proof}

\end{document}